\newcommand{\Z}{\mathbb{Z}}
\newcommand{\cC}{\mathcal C}
\newcommand{\cD}{\mathcal D}
\newtheorem{thm}{Theorem}[section]
\newtheorem{thm*}{Theorem}
\newtheorem{theorem}{Theorem}[section]
\newtheorem{theorem*}{Theorem}
\newtheorem{corollary}[thm]{Corollary}
\newtheorem{lemma}[thm]{Lemma}
\theoremstyle{definition}
\newtheorem{defn*}{Definition}
\newtheorem{definition}{Definition}[section]
\newtheorem{definition-st}{Definition}
\newtheorem{example-st}{Example}
\newtheorem{rem-st}{Remark}
\newtheorem{remark-st}{Remark}
\numberwithin{equation}{section}
\numberwithin{figure}{section}
\newtheorem{cor*}{Corollary}
\newtheorem{prop*}{Proposition}
\newtheorem{obser-st}{Observation}
\theoremstyle{definition}
\newtheorem{exmp-st}{Example}
\newtheorem{exmps-st}{Examples}
\theoremstyle{remark}
\newtheorem{rems-st}{Remarks} 
\newtheorem{ack-st}{Acknowledgment}
\title{Relative equilibrium states and class degree}
\author{Jisang Yoo}
\address{Ajou University, Suwon, South Korea}
\email{(replace X with my last name) jisang.X.ac+rs12@gmail.com}
\keywords{class degree, relative equilibrium state, infinite-to-one code, SFT, factor map}
\subjclass[2010]{Primary: 37B10} 
\begin{document}
\maketitle
\begin{abstract}
  Given a factor code $\pi$ from a shift of finite type $X$ onto a sofic shift $Y$, an ergodic measure $\nu$ on $Y$, and a function $V$ on $X$ with summable variation, we prove an invariant upper bound on the number of ergodic measures on $X$ which project to $\nu$ and maximize $h(\mu) + \int V d\mu$ among all measures in the fiber $\pi^{-1}(\nu)$. If $\nu$ is fully supported, this bound is the class degree of $\pi$. This generalizes a previous result for the special case of $V=0$.
\end{abstract}

\section{Introduction}

It is a classical result that given an irreducible shift of finite type $X$ there is a unique measure $\mu$ on $X$ that maximizes entropy $h(\mu)$ and that this unique measure, called the measure of maximal entropy, is an easily described Markov measure \cite{Parry-1964-intrinsic-markov-chain}. Also, given a real-valued function defined on $X$ with enough regularity, there is a unique measure on $X$, called the equilibrium state of $V$, that maximizes $h(\mu) + \int V d\mu$. Equilibrium states are more general than measure of maximal entropy: the equilibrium state of $0$ is the measure of maximal entropy.

We consider the relative case where a factor code $\pi:X \to Y$ from a shift of finite type $X$ to a sofic shift $Y$ is fixed and an ergodic measure $\nu$ is given. In the relative case, we restrict our attention to the measures in the fiber $\pi^{-1}(\nu)$. Even if $X$ is irreducible, there can be more than one measure that maximizes entropy among measures in $\pi^{-1}(\nu)$. These measures are called measures of relative maximal entropy. Petersen, Quas, and Shin proved that the number of ergodic measures of relative maximal entropy is always finite and gave an explicit upper bound~\cite{PQS-MaxRelEnt}. Allahbakhshi and Quas improved the upper bound to a conjugacy-invariant upper bound and introduced the notion of class degree~\cite{all2013classdegrelmaxent}. In the special case of $\nu$ with full support, their upper bound is equal to the class degree of the factor code. In the same paper, they proposed the conjecture that the class degree may also be the upper bound for the number of ergodic relative equilibrium states. Given a function $V$ on $X$ with summable variation, we prove the same conjugacy-invariant upper bound for the number of ergodic relative equilibrium states of $V$ over $\nu$.

In order to motivate parts of our proof and to explain the new main ingredient in the proof of our result, we explain shortly how previous results are proved. The previous result that the number of ergodic measures of relative maximal entropy is finite is proved in the following way. Suppose $\mu_1, \dots, \mu_{d+1}$ are distinct ergodic measures of relative maximal entropy over $\nu$ where $d$ is the number of letters (for $X$) that project to a fixed letter $b$ for $Y$ with $\nu(b)>0$. Form a relatively independent joining of the $d+1$ measures over $\nu$. Pigeonhole's principle then forces at least two, say $\mu_1,\mu_2$, of the $d+1$ measures to have the property $$\lambda(\{(x^{(1)},x^{(2)}): x^{(1)}_0 = x^{(2)}_0\}) > 0$$ where $\lambda = \mu_1 \otimes_\nu \mu_2$ is the relatively independent joining of the two measures over $\nu$. Since for every $(x^{(1)},x^{(2)})$ in some set of positive measure with respect to $\lambda$, there are infinitely many $i$ for which $x^{(1)}_i = x^{(2)}_i$, one can construct a point $x^{(3)}$ which is the result of splicing parts of $x^{(1)}$ or $x^{(2)}$ depending on the outcome of tossing a fair coin at every $i$ for which $x^{(1)}_i = x^{(2)}_i$. The probability distribution of the new point $x^{(3)}$ is a measure $\mu_3$ on $X$ which projects to $\nu$. The entropy of the new measure $\mu_3$ is then shown to be strictly greater than the entropy of $\mu_1$ or $\mu_2$, contradicting the initial assumption $\mu_1$ and $\mu_2$ are measures of relative maximal entropy. Therefore the number of ergodic measures of relative maximal entropy over $\nu$ cannot exceed $d$. The entropy increase of $\mu_3$ is shown with an application of Jensen's inequality.

The notion of class degree of a factor code is defined using an equivalence relation within fibers~\cite{all2013classdegrelmaxent}. Given a point $y$, the fiber $\pi^{-1}(y)$ is divided into finitely many components under the following equivalence relation: $x, x' \in \pi^{-1}(y)$ are equivalent if there is $x''$ in the same fiber that agrees with $x$ on $(-\infty,n]$ for a given arbitrary $n$ and with $x'$ on $[m,+\infty)$ for some $m>n$ and vice versa. The equivalence classes here are called transition classes over $y$. The number of transition classes over any transitive point $y \in Y$ is finite and same. This number is defined to be the class degree of the factor code. This generalizes the notion of the degree of a finite-to-one factor code: the common finite number of preimage points of any transitive point $y \in Y$ when $\pi$ is finite-to-one.

If we think of transition classes within fibers as the relative analogue of transitive components of a shift of finite type, it is natural to ask if the class degree bounds the number of measures of relative maximal measures and the number of relative equilibriums (over an ergodic $\nu$ with full support). 

The previous result that the number of ergodic measures of relative maximal measures is bounded by the class degree (when $\nu$ has full support) is proved in the following way. Suppose $\mu_1, \dots, \mu_{d+1}$ are distinct ergodic measures of relative maximal entropy over $\nu$ where $d$ is the class degree. As before, form a relatively independent joining of the $d+1$ measures over $\nu$ and apply pigeonhole's principle to conclude that for at least two measures, say $\mu_1, \mu_2$, we have $$\lambda(\{(x^{(1)},x^{(2)}): x^{(1)} \sim x^{(2)}\}) > 0$$ where $x^{(1)} \sim x^{(2)}$ means the two points are in the same transition class and  $\lambda = \mu_1 \otimes_\nu \mu_2$. Then the uniform conditional distribution property of measures of relative maximal entropy is used to show that this implies $$\lambda(\{(x^{(1)},x^{(2)}): x^{(1)}_0 = x^{(2)}_0\}) > 0$$ and a contradiction follows.

The proof of our result starts similarly by supposing that there are $d+1$ distinct ergodic relative equilibrium states of $V$. In our proof, we have to construct a new measure that satisfies a condition stronger than $$h(\mu_3) > h(\mu_1) \text{ or } h(\mu_3) > h(\mu_2)$$
namely
\begin{align*}
  h(\mu_3) &> h(\mu_1) + |\int V d\mu_1 - \int V d\mu_3| \text{ or} \\ h(\mu_3) &> h(\mu_2) + |\int V d\mu_2 - \int V d\mu_3|
\end{align*}

In other words, we need to construct a measure with the increase in entropy big enough that it overcomes the difference in integrals. An ingredient in our proof is an observation made in the following result by Antonioli~\cite{ant2014compen}. Given a relative equilibrium state $\mu$ of $V$ with summable variation, he showed that if $\mu$ does not have full support and $\nu$ has, then one can construct a new measure $\mu'$ by routing parts of a point in $X$ depending on outcomes of tossing a coin and the new measure has bigger $h(\mu') + \int V d\mu'$. If routing is done by using an $X$-block with zero measure, then it is known that the new measure has a bigger entropy~\cite{yoomaximal} (which proves that any measure of maximal relative entropy over $\nu$ has full support). Antonioli's new observation is that if a biased coin is used, then as the probability of coming up tails approaches zero, the difference in entropy (between the new measure $\mu'$ and the old measure $\mu$) dominates the difference in integral $\int V d\mu' - \int V d\mu$ (which proves that any relative equilibrium state of $V$ over $\nu$ has full support). The observation relies on restorability of the old point from the new point.

In our setting where we have a joining of two measures $\mu_1, \mu_2$, given two points $x^{(1)}, x^{(2)}$ (random points with its joint distribution being the joining), we have to form other points $x^{(3)}, x^{(4)}$ by alternating between parts of $x^{(1)}$ and $x^{(2)}$ in some way. The main difficulty in applying Antonioli's observation to our setting is that we cannot restore the old points $x^{(1)}, x^{(2)}$ from the new points $x^{(3)}, x^{(4)}$. But since $\mu_1, \mu_2$ are distinct ergodic measures, long blocks from $x^{(1)}$ are distinguishable from long blocks of $x^{(2)}$ with low probability of error. The rate of error goes to zero as the blocks become longer. The difficulty now is that we do not know enough about the speed of convergence of the error rate. Our main ingredient is in tossing a coin for every $N$'th occurrence of a fixed minimal transition block in order to work around this difficulty and $N$ is chosen in response to the speed of convergence of the error rate. This allows us to construct two new points in such a way that the increase in entropy dominates the difference in integral even if the speed of convergence of the error rate is slow. To enable this workaround, we prove some new results on the measure theoretic structure of infinite-to-one factor codes which are analogues of previous results on the topological structure.







\section{Background}
In this section, we introduce basic terminology and known results that will be used in our proof.

Throughout this paper, measures are always assumed to be probability measures. Shift spaces are assumed to be two-sided one-dimensional shift spaces.

A triple $(X,Y,\pi)$ is called a \emph{factor triple} if $\pi: X \to Y$ is a factor code from a SFT $X$ to a sofic shift $Y$. If a factor triple is such that $\pi$ is a 1-block factor code and $X$ is a 1-step SFT, then it is called a \emph{1-step 1-block factor triple}. Given a factor triple $(X,Y,\pi)$, there is a 1-step 1-block factor triple $(X', Y', \pi')$ that is topologically conjugate to $(X,Y,\pi)$~\cite{LM}.

\begin{definition}
  Given a factor triple $(X,Y,\pi)$ and an invariant measure $\nu$ on $Y$. An invariant measure $\mu$ on $X$ is called a measure of relative maximal entropy over $\nu$ if it projects to $\nu$ and its entropy is the biggest among all invariant measures on $X$ that projects to $\nu$.
\end{definition}
There is always at least one measure of relative maximal entropy over $\nu$. If $\nu$ is ergodic, then the ergodic decomposition of such $\mu$ decomposes it into ergodic measures of relative maximal entropy over $\nu$.

\begin{definition}
  Given a factor triple $(X,Y,\pi)$ and an invariant measure $\nu$ on $Y$ and a real-valued function on $X$ with summable variation. An invariant measure $\mu$ on $X$ is called a relative equilibrium state of $V$ over $\nu$ if it projects to $\nu$ and $h(\mu) + \int V d\mu$ is the biggest among all invariant measures on $X$ that projects to $\nu$.
\end{definition}
There is always at least one relative equilibrium state of $V$ over $\nu$. If $\nu$ is ergodic, then the ergodic decomposition of such $\mu$ decomposes it into ergodic relative equilibrium states of $V$ over $\nu$.

For more on the general theory of relative equilibrium states, see~\cite{Walters-Relative}.

We are not using any advanced probability theory, but in order to reduce verbosity of our arguments, we will borrow the language of random variables. Random variables here are defined to be almost everywhere defined measurable functions from a fixed Lebesgue space to Polish spaces. The notion of functions of a random variable, joint random variable, and probability distribution of a random variable are adopted.
\section{Class degree of a factor code}

The class degree of a factor code and the concept of transition blocks and minimal transition blocks are defined in \cite{all2013classdegrelmaxent}.

Given a 1-step 1-block factor triple $(X,Y,\pi)$ and an ergodic measure $\nu$ on $Y$, we say $(w,n,M)$ is a $\nu$-minimal transition block if it is a transition block with $\nu(w) > 0$ and if it has the smallest depth among all such transition blocks. If $\nu$ has full support, then the $\nu$-minimal transition blocks are exactly the minimal transition blocks.


\section{Class degree of an ergodic measure}

Given a factor triple $(X,Y,\pi)$ and an ergodic measure $\nu$ on $Y$, we define the class degree of $\nu$ to be a positive integer defined by the following result \cite{all2013classdegrelmaxent}. (Its proof does not use the assumption that $Y$ is irreducible)

\begin{theorem}\label{thm:classdeg}
  Let $(X,Y,\pi)$ be a factor triple and $\nu$ an ergodic measure on $Y$. Then $\nu$-almost every point of $Y$ has the same number of transition classes over it. We call this number the class degree of $\nu$ and denote it by $c_{\pi,\nu}$ or $c_{\nu}$. If $(X,Y,\pi)$ is a 1-step 1-block factor triple, then this number is equal to the depth of any $\nu$-minimal transition block. If $\nu$ is fully supported, then this number is equal to the class degree of $\pi$.
\end{theorem}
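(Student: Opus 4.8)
The plan is to reduce the general statement to the 1-step 1-block case and then exploit the combinatorial structure of transition blocks. First I would invoke the fact, stated in the Background section, that any factor triple $(X,Y,\pi)$ is topologically conjugate to a 1-step 1-block factor triple $(X',Y',\pi')$. Since transition classes, fibers, and the property ``$\nu$-almost every point'' are all preserved under topological conjugacy (the conjugacy on $Y$ pushes $\nu$ forward to an ergodic measure $\nu'$ on $Y'$, and sets up a measure-theoretic isomorphism of the fiber structures), it suffices to prove the theorem for a 1-step 1-block factor triple. So from now on assume $(X,Y,\pi)$ is 1-step 1-block, let $d$ be the depth of a $\nu$-minimal transition block $(w,n,M)$, and let $N(y)$ denote the number of transition classes over $y\in Y$.

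The core of the argument is to show that $N(y) = d$ for $\nu$-a.e.\ $y$. I would first establish that $N$ is a shift-invariant measurable function of $y$: measurability follows because the transition-class relation on a fixed fiber is generated by finite-block conditions (the $x''$ witnessing equivalence agrees with $x$ on an initial segment and with $x'$ on a terminal segment), and shift-invariance is immediate from the definition. By ergodicity of $\nu$, $N$ is therefore $\nu$-a.e.\ equal to a constant, call it $c$. The task is then to identify $c$ with $d$. For the inequality $c \le d$, I would use the depth of a transition block directly: a transition block $(w,n,M)$ of depth $d$ certifies, whenever $w$ appears in $y$, that the fiber over $y$ ``funnels'' through at most $d$ classes across that occurrence; more precisely, one shows that any two preimages agreeing suitably far to the left are routed into one of at most $d$ transition classes to the right of the occurrence of $w$. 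Since $\nu(w)>0$, by the ergodic theorem $\nu$-a.e.\ $y$ contains $w$, so $c \le d$. For the reverse inequality $c \ge d$, I would argue that if some fiber over a $\nu$-generic $y$ had fewer than $d$ classes, one could use this together with the recurrence of $w$ (again by the pointwise ergodic theorem, $w$ appears in $y$ with positive frequency) to build a transition block with $\nu$-positive word and depth strictly less than $d$, contradicting $\nu$-minimality of $(w,n,M)$. This minimality-versus-genericity tension is where I expect the real work to be, and it is the main obstacle: one must carefully transfer a deficiency observed on a single generic fiber into an honest finite combinatorial object (a transition block) whose underlying word still has positive $\nu$-measure.

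Finally, for the last sentence of the theorem, suppose $\nu$ is fully supported. Then every word $w$ that occurs in $Y$ has $\nu(w)>0$, so the class of $\nu$-minimal transition blocks coincides with the class of all minimal transition blocks, and hence $d$ equals the depth of a minimal transition block, which by \cite{all2013classdegrelmaxent} is exactly the class degree $c_\pi$ of $\pi$. Combining with the previous paragraph gives $c_{\pi,\nu} = d = c_\pi$, completing the proof. I would remark, as the parenthetical in the statement signals, that nowhere does this argument use irreducibility of $Y$: the only inputs are the 1-step 1-block reduction, the pointwise ergodic theorem applied to $\nu$ (to control occurrences of $w$ and to force $N$ to be a.e.\ constant), and the combinatorial theory of transition blocks and depth from \cite{all2013classdegrelmaxent}.
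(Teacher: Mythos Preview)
The paper does not supply its own proof of this theorem: it is quoted verbatim as a result from \cite{all2013classdegrelmaxent}, accompanied only by the parenthetical remark that the proof there does not use irreducibility of $Y$. So there is no in-paper argument to compare your proposal against.

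That said, your outline is broadly in line with the approach of the cited reference: reduce to the 1-step 1-block case, observe that the number of transition classes is a shift-invariant measurable function of $y$ and hence $\nu$-a.e.\ constant by ergodicity, and then pin that constant to the depth $d$ of a $\nu$-minimal transition block via the two inequalities. Your treatment of $c \le d$ is essentially correct once you note that $\nu(w)>0$ and ergodicity force $w$ to occur bi-infinitely in $\nu$-a.e.\ $y$, so every preimage routes through a symbol of $M$ at each occurrence and pigeonhole collapses the classes to at most $|M|=d$. You are also right that the direction $c \ge d$ is where the substance lies; your proposed contradiction (extracting from a generic fiber with fewer than $d$ classes a transition block of strictly smaller depth whose word still has positive $\nu$-measure) is the right shape, but as you acknowledge, turning that into an honest finite block requires care and is exactly the content of the argument in \cite{all2013classdegrelmaxent}. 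The final paragraph on full support is correct and matches the paper's own one-line justification that $\nu$-minimal and minimal transition blocks coincide.
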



\section{Measure theoretic properties of transition classes}

We establish a measure theoretic analogue of a result in \cite{all2014structureclass}.
\begin{theorem}\label{them:uniqroute}
  Let $(X,Y,\pi)$ be a 1-step 1-block factor triple and $\nu$ an ergodic measure on $Y$. Let $\mu$ be an invariant measure on $X$ that projects to $\nu$. Let $(w,n,M)$ be a $\nu$-minimal transition block. Let $u$ be an X-block with $\mu(u) > 0$ that projects to $w$. Then $u$ is routable through a unique symbol in $M$ at time $n$.
\end{theorem}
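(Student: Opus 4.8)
The plan is to derive the uniqueness from Theorem~\ref{thm:classdeg} by contradiction. Existence needs no argument: by the definition of a transition block, \emph{every} $X$-block over $w$, $u$ in particular, is routable through at least one symbol of $M$ at time $n$, so the entire content is uniqueness. Assume then that $u$ is routable at time $n$ through two distinct symbols $a,b\in M$, witnessed by $X$-blocks $\alpha,\beta$ over $w$ with $\alpha_1=\beta_1=u_1$, $\alpha_{|w|}=\beta_{|w|}=u_{|w|}$, $\alpha_n=a$, and $\beta_n=b$. From this redundancy I will build a transition block of depth $c_\nu-1$ over a $Y$-block of positive $\nu$-measure, which Theorem~\ref{thm:classdeg} forbids.

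The measure-theoretic input, and the reason for the hypothesis $\mu(u)>0$, is the following. Since $X$ is $1$-step and $\alpha,\beta$ agree with $u$ at both ends and share its $\pi$-image $w$, a disjoint occurrence of $u$ inside an $X$-block may be overwritten by $\alpha$ or by $\beta$ without breaking admissibility or changing the $Y$-image. As $\mu(u)>0$, iterating Poincaré recurrence yields, for each $K$, a $\mu$-positive $X$-block $U=u\,g_1\,u\cdots g_{K-1}\,u$; and because $[U]\subseteq\pi^{-1}[\pi U]$ while $\mu$ projects to $\nu$, the $Y$-block $W:=\pi(U)$ has $\nu(W)>0$. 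So $W$ is a $\nu$-positive $Y$-block carrying $K$ disjoint copies of $w$, over which there is an $X$-block $U$ equal to $u$ on each copy. By the overwriting remark, on each copy $U$ can be replaced independently by $\alpha$ or $\beta$; more generally, any $X$-block over $W$ can be rerouted through $M$ at the $n$-position of a chosen copy while everything outside that copy is held fixed, and steered through $a$ rather than $b$ whenever that copy carries the endpoints $u_1,u_{|w|}$.

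What remains is the combinatorial core: from this redundancy and sufficiently many copies of $w$ inside $W$, build a transition block $(W,N,M')$ with $|M'|=c_\nu-1$, $N$ being the $n$-position of a well chosen interior copy. This is the measure-theoretic analogue of the construction in \cite{all2014structureclass}: one removes a redundant symbol, say $b$, from $M$ and verifies that the smaller set still routes \emph{every} $X$-block over $W$ at position $N$, using the remaining copies of $w$ and the interpolating material to re-steer any block otherwise forced through $b$ at $N$ toward a retained symbol. Since $\nu(W)>0$, a transition block of depth $c_\nu-1$ contradicts Theorem~\ref{thm:classdeg}, which completes the argument.

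I expect this last step to be the genuine obstacle. In \cite{all2014structureclass} the shift $Y$ is irreducible and one may interpolate between blocks with arbitrary glue words; here the usable building blocks are only those occurring with positive measure, i.e.\ the copies of $w$ and the interpolating blocks $g_i$ present in the $\nu$-positive $W$ at hand, and there is no irreducibility of the support of $\nu$ to invoke. The work lies in choosing $K$, the interior copy, and $W$ itself, using the latitude that recurrence gives to lengthen $W$ and to vary the $g_i$ over positive-measure return words, so that the re-steering works for all $X$-blocks over $W$, not merely those whose relevant copy inherits the convenient endpoints of $U$. A minor point: $\mu$ is only assumed invariant, which is why Poincaré recurrence is used rather than the ergodic theorem; equivalently, pass to an ergodic component of $\mu$ charging $u$, which still projects to $\nu$ since $\nu$ is ergodic.
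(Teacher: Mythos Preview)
Your outline correctly isolates existence as immediate and uses recurrence to produce a $\nu$-positive word $W$ carrying many disjoint copies of $w$, but the ``combinatorial core'' you postpone is a genuine gap, and it is not clear it can be closed along the lines you sketch. The obstacle you name is real: to make $(W,N,M\setminus\{b\})$ a transition block you must re-route \emph{every} $X$-block $v$ over $W$ through $M\setminus\{b\}$ at position $N$, yet the $\alpha/\beta$ flexibility is available only at copies where the block carries the endpoints $u_1,u_{|w|}$. An arbitrary $v$ may, at every copy of $w$, route only through symbols of $M\setminus\{a,b\}$, and then there is no handle for splicing it onto the $U$-path at all. When $d=|M|=2$ your argument does go through (every routing symbol is already $a$ or $b$, so one can splice onto $U$ at the first and last copies and force $a$ at an interior copy), but for $d\ge 3$ no amount of lengthening $W$ or varying the $g_i$ visibly forces an arbitrary $v$ to meet $\{a,b\}$ at some copy, and you have not supplied the missing mechanism.

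The paper avoids this entirely by a point-based argument that never tries to build a smaller transition block. It fixes a point $x$ in which $u$ occurs at infinitely many positions $i_j$ and for which $y=\pi(x)$ has exactly $d=c_\nu$ transition classes, then chooses representatives $x^{(1)},\dots,x^{(d-1)}$ of the classes other than that of $x$. Because $u$ routes through both $a^{(1)}$ and $a^{(2)}$, any $x^{(k)}$ whose block at infinitely many of the $i_j$ routes through $a^{(1)}$ or $a^{(2)}$ would lie in the class of $x$; hence past some $J$ every $x^{(k)}$ routes only through $M\setminus\{a^{(1)},a^{(2)}\}$ at these positions. Since $|M\setminus\{a^{(1)},a^{(2)}\}|=d-2<d-1$, a double pigeonhole produces two of the $x^{(k)}$ sharing a routing symbol at infinitely many $i_j$, hence lying in the same class---a contradiction. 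Working with one bi-infinite $y$ and its $d$ transition classes thus replaces the need to control \emph{all} finite $X$-blocks over a fixed $W$, which is exactly the step your proposal leaves open.
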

\begin{proof}
Since $w$ is a transition block, $u$ is routable through at least one member of $M$ at time $n$. To show that $u$ is routable through at most one member of $M$ at time $n$, we suppose to the contrary that $u$ is routable through two distinct members $a^{(1)}$ and $a^{(2)}$ of $M = \{a^{(1)},a^{(2)}, \dots, a^{(d)}\}$ where $d \ge 2$ is the size of $M$. By Theorem~\ref{thm:classdeg}, $d$ is equal to the class degree of $\nu$.

By Poincare's recurrence theorem, for $\mu$-almost every point $x$ in the cylinder $[u] \subset X$, the block $u$ occurs infinitely many times to the right in $x$. And for $\mu$-almost every point $x \in X$, the point $\pi(x)$ has exactly $d$ transition classes over it. Therefore there exists a point $x \in X$ such that $u$ occurs infinitely many times to the right in $x$ and that $\pi(x)$ has exactly $d$ transition classes over it. Fix such a point $x$. Fix $d-1$ points $x^{(1)}, x^{(2)}, \dots, x^{(d-1)} \in X$ such that the $d$ points $x, x^{(1)}, x^{(2)}, \dots, x^{(d-1)}$ are in different transition classes over $\pi(x)$.

Since $u$ occurs infinitely many times to the right in $x$, we can choose positions $\{ [i_j, i_j + |w| -1] \}_{j \ge 1}$ such that $i_{j+1} > i_j + |w|$ and $x_{[i_j, i_j + |w| -1]} = u$.

For each $k$ and $j$, the block $x^{(k)}_{[i_j, i_j + |w| -1]}$ projects to $w$ and hence is routable through a symbol in $M$ at time $n$.

If there is $x^{(k)}$ such that $x^{(k)}_{[i_j, i_j + |w| -1]}$ is routable through $a^{(1)}$ or $a^{(2)}$ at time $n$ for infinitely many $j$, then the point $x^{(k)}$ is in the same transition class as $x$, which gives a contradiction.

Therefore there is $J \ge 1$ such that for each $j \ge J$ and for each $x^{(k)}$, the block $x^{(k)}_{[i_j, i_j + |w| -1]}$ is routable through a symbol, say $a^{(q(k,j))}$, in $M \setminus \{a^{(1)},a^{(2)}\}$ at time $n$. By the pigeonhole principle, for each $j \ge J$, there are distinct $k'_j, k''_j$ such that $q(k'_j,j) = q(k''_j, j)$. By applying the pigeonhole principle again, there are two distinct points $x^{(k')}, x^{(k'')}$ among the $d-1$ points such that $k' = k'_j$ and $k'' = k''_j$ for infinitely many $j \ge J$. The two points have the property that for infinitely many $j$, the blocks $x^{(k')}_{[i_j, i_j + |w| -1]}$ and $x^{(k'')}_{[i_j, i_j + |w| -1]}$ are routable through a common symbol at time $n$. This forces the two points to be in the same transition class, which gives a contradiction.
\end{proof}

We now introduce the notion of relative joining, of which relatively independent joining is an example. Given a factor triple $(X,Y,\pi)$, an invariant measure $\lambda$ on the product $X^2$ is called a (2-fold) \emph{relative joining} if for $\lambda$-almost every $(x,x')$ we have $\pi(x) = \pi(x')$. Let $p_1: X^2 \to X$ (resp. $p_2: X^2 \to X$) be the projection onto the first coordinate (resp. the second coordinate). Given a relative joining $\lambda$ on $X^2$, if $\mu_1$ and $\mu_2$ are invariant measures on $X$ such that $\mu_1 = p_1(\lambda)$ and $\mu_2 = p_2(\lambda)$, then we say $\lambda$ is a relative joining of $\mu_1$ and $\mu_2$. Given a relative joining $\lambda$ on $X^2$, if $\nu$ is an invariant measure on $Y$ such that $\nu = \pi\circ p_1(\lambda)$, then we say $\lambda$ is a relative joining over $\nu$.

If $\lambda$ is a relative joining of $\mu_1$ and $\mu_2$ over $\nu$, then $\pi(\mu_1) = \nu = \pi(\mu_2)$. Conversely, if $\mu_1, \mu_2$ are invariant measures on $X$ and if $\nu$ is an invariant measure on $Y$ such that $\pi(\mu_1) = \nu = \pi(\mu_2)$, then there is a relative joining of $\mu_1$ and $\mu_2$ over $\nu$, namely, the relatively independent joining.

We define and compare three subsets of $X^2$ given a 1-step 1-block factor triple $(X,Y,\pi)$. Let $D_1$ be the set of $(x,x') \in X^2$ such that $\pi(x) = \pi(x')$ and that $x,x'$ are in the same transition class over $\pi(x)$. We call this set the \emph{class diagonal} from the factor triple $(X,Y,\pi)$.
Let $D_2$ be the set of $(x,x') \in X^2$ such that $\pi(x) = \pi(x')$ and that $x,x'$ are routable through a common symbol at a common time.
Let $D_3$ be the set of $(x,x') \in X^2$ such that $\pi(x) = \pi(x')$ and that there is a point $z \in \pi^{-1}\pi(x)$ that is left asymptotic to $x$ and right asymptotic to $x'$ and a point $z' \in \pi^{-1}\pi(x)$ that is left asymptotic to $x'$ and right asymptotic to $x$.
The three sets $D_1, D_2, D_3$ are invariant Borel-measurable subsets of $X^2$ and we have $D_1 \subset D_2 \subset D_3$.

\begin{theorem}\label{thm:3d}
  Given a 1-step 1-block factor triple $(X,Y,\pi)$ and a relative joining $\lambda$ on $X^2$, we have $D_1 = D_2 = D_3 \pmod\lambda$.
\end{theorem}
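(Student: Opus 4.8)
We already know the inclusions $D_1 \subset D_2 \subset D_3$ hold pointwise, so the task is to prove the reverse inclusions $\pmod\lambda$. The plan is to show $D_3 \subset D_1 \pmod\lambda$ directly, which closes the loop. Let me think about what tools we have.

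The key input is Theorem~\ref{them:uniqroute}: for a $\nu$-minimal transition block $(w,n,M)$, any $X$-block $u$ with positive $\mu$-measure that projects to $w$ is routable through a *unique* symbol of $M$ at time $n$. The first move is to transfer this to the joining. Given the relative joining $\lambda$ of $\mu_1$ and $\mu_2$ over $\nu$ (where $\mu_i = p_i(\lambda)$), fix a $\nu$-minimal transition block $(w,n,M)$ with $M = \{a^{(1)},\dots,a^{(d)}\}$, $d = c_\nu$. For each symbol $a^{(t)} \in M$, define the "routing function" $r_i(x)$ on the set of $x$ with $x_{[0,|w|-1]}$ routable through some member of $M$: by Theorem~\ref{them:uniqroute} applied to $\mu_i$, for $\mu_i$-a.e.\ such $x$ there is a unique such symbol, so $r_i$ is a.e.\ well-defined and measurable. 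Actually more care is needed: Theorem~\ref{them:uniqroute} is a statement about blocks $u$ with $\mu(u)>0$, so for $\mu_i$-a.e.\ point $x$ landing in $[w]$ (suitably shifted), the block it carries is routable through a unique symbol; by the Poincaré recurrence / ergodic machinery this gives a genuine measurable routing function defined $\mu_i$-a.e.\ on the set where $w$ occurs at a given position, and hence $\lambda$-a.e.\ on the corresponding subset of $X^2$ for each coordinate.

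Now suppose $(x,x') \in D_3$. By definition there is $z$ left-asymptotic to $x$ and right-asymptotic to $x'$, with $\pi(z)=\pi(x)=\pi(x')$. Since $z$ agrees with $x$ on a left half-line and with $x'$ on a right half-line, and since (for $\lambda$-a.e.\ $(x,x')$) the block $w$ occurs infinitely often in $\pi(x)$ both to the left and to the right, pick an occurrence of $w$ far enough to the left that $z$ agrees with $x$ there and far enough to the right that $z$ agrees with $x'$ there; because $z$ is a single legitimate point in the fiber, it routes consistently through one symbol of $M$ at each occurrence in a way compatible with the transition-block structure, forcing the routing symbol read off from $x$'s left occurrence to equal the one read off from $x'$'s right occurrence — i.e.\ $r_1(x) = r_2(x')$ at suitable positions. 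But the existence of such a $z$, together with the symmetric $z'$, is exactly the topological condition characterizing membership in the same transition class (compare the definition of transition classes in the introduction and \cite{all2014structureclass}); so $(x,x') \in D_1$. The heart of the argument is that the *unique* routing symbol, which is a measurable function of each coordinate separately ($\lambda$-a.e.), is forced to agree across the two coordinates whenever $(x,x')\in D_3$, and agreement of routing symbols at infinitely many times is precisely what puts $x,x'$ in the same transition class.

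The main obstacle I anticipate is the bookkeeping needed to make "read off the routing symbol at an occurrence of $w$" into a genuinely $\lambda$-measurable, shift-compatible random variable and to handle the asymptotic point $z$ rigorously: one must choose the occurrences of $w$ measurably, invoke Poincaré recurrence in both directions simultaneously under $\lambda$, and argue that the routing symbol stabilizes — i.e.\ is the *same* symbol of $M$ at all sufficiently separated occurrences along a fixed point, which itself should follow from the definition of depth and the minimality of $(w,n,M)$ combined with Theorem~\ref{them:uniqroute}. Once the routing function is in hand and shown to be a.e.\ coordinatewise-defined, the implications $D_3 \Rightarrow$ (common routing symbol infinitely often) $\Rightarrow D_1$ are essentially the measurable shadow of the topological structure theory, and the chain $D_1 \subset D_2 \subset D_3 \subset D_1 \pmod\lambda$ closes.
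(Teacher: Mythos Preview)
Your approach overcomplicates the argument and contains a genuine gap. You route everything through Theorem~\ref{them:uniqroute} and a $\nu$-minimal transition block, but Theorem~\ref{thm:3d} does not assume the projected measure $\nu$ is ergodic, whereas Theorem~\ref{them:uniqroute} does. More seriously, the step where you pass through the bridge point $z$ breaks down: you claim that because $z$ is ``a single legitimate point in the fiber, it routes consistently through one symbol of $M$ at each occurrence,'' and you later flag that you must show ``the routing symbol stabilizes.'' Theorem~\ref{them:uniqroute} says nothing of the sort---it gives a unique routing symbol only for blocks of positive $\mu$-measure, at a single occurrence, and $z$ is not generic for either marginal. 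And the sentence ``the existence of such a $z$, together with the symmetric $z'$, is exactly the topological condition characterizing membership in the same transition class'' is simply the definition of $D_3$, not of $D_1$; if that implication held pointwise, there would be nothing to prove. What you actually need is that the bridging behaviour recurs \emph{infinitely often}, and the routing-symbol machinery does not supply that.

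The paper's proof bypasses all of this. It never touches Theorem~\ref{them:uniqroute} or minimal transition blocks. Instead, observe that $D_3$ is a countable union over pairs $(u,v)$ of $X$-blocks with $\pi(u)=\pi(v)$ that admit ``bridge'' blocks (an $X$-block over $\pi(u)$ from the first symbol of $u$ to the last symbol of $v$, and one in the reverse direction): $(x,x')\in D_3$ exactly when some such $(u,v)$ occurs in $(x,x')$. By Poincar\'e recurrence applied to $\lambda$, any such block pair that occurs once occurs infinitely often to the right $\lambda$-a.e. But if a bridging pair $(u,v)$ occurs at arbitrarily large positions in $(x,x')$, then for every $n$ you can splice from $x$ to $x'$ (and back) past $n$, which is precisely the definition of $x\sim x'$, i.e.\ $(x,x')\in D_1$. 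That is the whole proof: one application of Poincar\'e recurrence to cylinder sets in $X^2$, with no routing analysis at all.
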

\begin{proof}
It is enough to show that $D_3 \subset D_1 \pmod\lambda$.

Let $C$ be the set of pairs $(u,v)$ of $X$-blocks such that $\pi(u) = \pi(v)$ and that there is an $X$-block $w \in \pi^{-1}\pi(u)$ that starts with the same symbol as $u$ and ends with the same symbol as $v$ and a $X$-block $w' \in \pi^{-1}\pi(u)$ that starts with the same symbol as $v$ and ends with the same symbol as $u$.

For each $(u,v) \in C$, let $D_{(u,v)}$ be the set of $(x,x') \in X^2$ such that $\pi(x) = \pi(x')$ and that the $X^2$-block $(u,v)$ occurs in $(x,x')$. Then $D_3$ is the union of $D_{(u,v)}$.

For each $(u,v) \in C$, let $D'_{(u,v)}$ be the set of  $(x,x') \in X^2$ such that $\pi(x) = \pi(x')$ and that the $X^2$-block $(u,v)$ occurs infinitely many times to the right in $(x,x')$. By Poincare's recurrence theorem, $D_{(u,v)} = D'_{(u,v)} \pmod\lambda$.

It is easy to check that each $D'_{(u,v)}$ is a subset of $D_1$.
\end{proof}

A relative joining $\lambda$ on $X^2$ is called a \emph{class diagonal joining} if for $\lambda$-almost every $(x, x')$, the two points $x,x'$ are in the same transition class over the point $\pi(x) = \pi(x')$.

The following theorem is a measure theoretic analogue of another result in \cite{all2014structureclass}.
\begin{theorem}\label{thm:blockroute}
  Let $(X,Y,\pi)$ be a 1-step 1-block factor triple and $\nu$ an ergodic measure on $Y$. Let $\lambda$ be a class diagonal joining on $X^2$ over $\nu$. Let $(w,n,M)$ be a $\nu$-minimal transition block. Let $u,v$ be X-blocks that projects to $w$ such that $\lambda([u]\times [v]) >0$. Then the two blocks $u,v$ are routable through a common symbol in $M$ at time $n$.
\end{theorem}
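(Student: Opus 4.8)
The plan is to run the argument of Theorem~\ref{them:uniqroute} inside the joining $\lambda$. Write $\ell = |w|$ and $d = |M|$, so $d = c_\nu$ by Theorem~\ref{thm:classdeg}. Since $\lambda$ is a relative joining over $\nu$, its marginals $\mu_1 = p_1(\lambda)$ and $\mu_2 = p_2(\lambda)$ project to $\nu$, and $\mu_1([u]) \ge \lambda([u]\times[v]) > 0$, $\mu_2([v]) > 0$. Applying Theorem~\ref{them:uniqroute} to $(\mu_1,u)$ and to $(\mu_2,v)$ gives a unique symbol $a \in M$ through which $u$ is routable at time $n$ and a unique symbol $b \in M$ through which $v$ is routable at time $n$. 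It therefore suffices to show $a = b$, so suppose $a \ne b$ for contradiction.

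First I would fix one good pair of points. By Poincar\'e recurrence applied to $\lambda$ and the cylinder $[u]\times[v]$, $\lambda$-almost every $(x,x') \in [u]\times[v]$ has the block $(u,v)$ occurring at infinitely many positions $i_1 < i_2 < \cdots$ to the right. Because $\lambda$ is a class diagonal joining, $\lambda$-almost every $(x,x')$ has $\pi(x) = \pi(x') =: y$ with $x,x'$ in the same transition class over $y$; and because $\pi\circ p_1(\lambda) = \nu$, Theorem~\ref{thm:classdeg} gives that $\lambda$-almost every $(x,x')$ has exactly $d$ transition classes over $y$. Since $\lambda([u]\times[v]) > 0$, I may fix $(x,x')$ in the intersection of these three sets, together with representatives $z^{(1)}, \dots, z^{(d)} \in \pi^{-1}(y)$, one from each transition class over $y$, chosen so that $z^{(1)} = x$; then $x'$ also lies in the transition class of $z^{(1)}$.

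The heart of the proof is then a double pigeonhole argument, powered by the fact that the transition class of $x$ already accounts for the two distinct symbols $a$ and $b$ at every occurrence $i_j$. The one lemma I would prove by hand (a routine gluing in the $1$-step SFT, as in the last paragraph of the proof of Theorem~\ref{them:uniqroute}) is: if, for infinitely many $j$, the blocks $z^{(k)}_{[i_j, i_j+\ell-1]}$ and $z^{(k')}_{[i_j, i_j+\ell-1]}$ are both routable through a common symbol of $M$ at time $n$, then $z^{(k)}$ and $z^{(k')}$ lie in the same transition class over $y$. Now $x_{[i_j,i_j+\ell-1]} = u$ is routable through $a$ and $x'_{[i_j,i_j+\ell-1]} = v$ is routable through $b$ at \emph{every} $j$; since $x$ and $x'$ are in the same class, the lemma shows that no $z^{(k)}$ with $k \ge 2$ can have $z^{(k)}_{[i_j,i_j+\ell-1]}$ routable through $a$, nor through $b$, for infinitely many $j$ -- otherwise it would merge with the class of $x$. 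Hence there is $J$ such that for all $j \ge J$ and all $k \in \{2,\dots,d\}$ the block $z^{(k)}_{[i_j,i_j+\ell-1]}$, being a preimage of the transition block $w$ and so routable through at least one symbol of $M$, is routable through some symbol of $M \setminus \{a,b\}$ at time $n$. As $|M\setminus\{a,b\}| = d-2$ while there are $d-1$ points $z^{(2)},\dots,z^{(d)}$, for each $j \ge J$ two of them are routable through a common symbol of $M\setminus\{a,b\}$ at $i_j$; since there are only finitely many pairs, one pair $z^{(k')}, z^{(k'')}$ with $k' \ne k''$ works for infinitely many $j$, and the lemma puts $z^{(k')}, z^{(k'')}$ in the same transition class over $y$ -- contradicting that they represent distinct classes. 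Hence $a = b$.

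I expect the only non-mechanical ingredient to be the gluing lemma; everything else is assembling Theorem~\ref{them:uniqroute}, Theorem~\ref{thm:classdeg}, Poincar\'e recurrence, and the class-diagonal hypothesis. The genuinely new idea over the $V=0$ situation of Theorem~\ref{them:uniqroute} is simply that the class diagonal hypothesis makes the common transition class of $x$ and $x'$ responsible for two symbols of $M$ at once, which is exactly what pushes the count $2 + (d-1)$ past $d$.
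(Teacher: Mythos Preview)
Your argument is correct, and it is a genuinely different proof from the one in the paper. You reuse the pigeonhole/counting mechanism from the proof of Theorem~\ref{them:uniqroute} at the level of points: you pick a $\lambda$-generic pair $(x,x')$ in the same transition class, observe that this single class already accounts for both symbols $a$ and $b$ along the recurrence times $i_j$, and then force two of the remaining $d-1$ class representatives to collide inside $M\setminus\{a,b\}$. The paper instead stays at the level of blocks: from $\lambda(D_1)=1$ it uses $D_1\subset D_3$ and Poincar\'e recurrence to find a pair $(u'',v'')\in C$ that occurs in $(x,x')$ both to the left and to the right of an occurrence of $(u,v)$; sandwiching $(u,v)$ between two copies of $(u'',v'')$ produces a single $X$-block $\bar u$ with $\mu_1(\bar u)>0$ that is routable through both $a$ and $b$ at the appropriate time inside a longer $\nu$-minimal transition block $\bar w$, which contradicts Theorem~\ref{them:uniqroute} applied to $\bar u$ directly. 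Your route is more pointwise and avoids the auxiliary set $C$ and the $D_3$ decomposition entirely, at the cost of re-running the pigeonhole argument; the paper's route is more modular, packaging the whole contradiction into a single application of Theorem~\ref{them:uniqroute} once $\bar u$ is built. One small remark: the ``gluing lemma'' you isolate is exactly the pointwise content of $D_2\subset D_1$ (Theorem~\ref{thm:3d}) along the specific sequence $i_j\to\infty$, so you are not introducing anything beyond what the paper already has available.
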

\begin{proof}
Since $w$ is a transition block, $u$ is routable through a symbol in $M$, say $a$, at time $n$. Similarly, $v$ is routable through a symbol in $M$, say $b$, at time $n$. It is enough to show that $a=b$. Suppose $a \neq b$.

Let $C$ be the set as defined in the proof of Theorem~\ref{thm:3d}.

For each $(u'',v'') \in C$, let $D_{(u'',v'')}$ be the set of $(x,x') \in X^2$ such that $\pi(x) = \pi(x')$ and that the $X^2$-block $(u'',v'')$ occurs in $(x,x')$. Then $D_3$ is the union of $D_{(u'',v'')}$ when $(u'',v'')$ runs over the elements of $C$.

For each $(u'',v'') \in C$, let $D''_{(u'',v'')}$ be the set of  $(x,x') \in X^2$ such that $\pi(x) = \pi(x')$ and that the $X^2$-block $(u'',v'')$ occurs infinitely many times both in $(x,x')_{[0,\infty)}$ and in $(x,x')_{(-\infty,0]}$. By Poincare's recurrence theorem, $D_{(u'',v'')} = D''_{(u'',v'')} \pmod\lambda$.

Since $\lambda(D_1)=1$, we have $\lambda(D_3)=1$, but since $D_3 = \bigcup_{(u'',v'') \in C} D''_{(u'',v'')} \pmod\lambda$, there is $(u'',v'') \in C$ such that $\lambda([u]\times [v] \cap D''_{(u'',v'')}) > 0$. Fix such $(u'',v'') \in C$.

For each $(x,x') \in [u]\times [v] \cap D''_{(u'',v'')}$, the $X^2$-block $(u'',v'')$ occurs in $(x,x')_{[|w|, \infty)}$ and in $(x,x')_{(-\infty,-1]}$ while $(u,v)$ occurs between them. Therefore, there is $(\bar u, \bar v)$ with $\lambda([\bar u]\times [\bar v]) > 0$ such that $(u'',v'')$ occurs at the beginning and at the end of $(\bar u, \bar v)$ and that $(u,v)$ occurs at a position, say $[i, i+|w|-1]$, between them in $(\bar u, \bar v)$.

Since $\lambda([\bar u]\times [\bar v]) > 0$, we have $\pi(\bar u) = \pi(\bar v)$ and $\mu(\bar u) > 0$ where $\mu = p_1(\lambda)$. Let $\bar w = \pi(\bar u)$. Since $\bar w$ contains $w$ and $\nu(\bar w) > 0$, we can conclude that $(\bar w, i+n, M)$ is another $\nu$-minimal transition block.

The block $\bar u$ is routable through the symbol $a \in M$ at time $i+n$. Because $(u'',v'') \in C$ occurs at the beginning and at the end of $(\bar u, \bar v)$, the block $\bar u$ is routable through also $b \in M$ at time $i+n$. This contradicts Theorem~\ref{them:uniqroute}.
\end{proof}

We have the following pointwise statement.
\begin{corollary}\label{cor:pointroute}
  Let $(X,Y,\pi)$ be a 1-step 1-block factor triple and $\nu$ an ergodic measure on $Y$. Let $\lambda$ be a class diagonal joining on $X^2$ over $\nu$. Let $(w,n,M)$ be a $\nu$-minimal transition block. For $\lambda$-almost every $(x,x')$, we have that for each $i$ with $\pi(x)_{[i,i+|w|-1]} = w$, the two blocks $x_{[i,i+|w|-1]}$ and $x'_{[i,i+|w|-1]}$ are routable through a common symbol in $M$ at time $n$.
\end{corollary}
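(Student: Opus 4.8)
The plan is to deduce the pointwise statement from Theorem~\ref{thm:blockroute} by a standard countable-union argument combined with the Poincaré recurrence / ergodic-theorem machinery already used in the preceding proofs. First I would note that, since $X$ is an SFT and $\pi$ is a $1$-block code, there are only countably many (indeed finitely many of each length) $X$-blocks $u$, $v$ projecting to the fixed block $w$. For each ordered pair $(u,v)$ of such blocks with $\lambda([u]\times[v])>0$, Theorem~\ref{thm:blockroute} gives a single symbol $c(u,v)\in M$ through which both $u$ and $v$ are routable at time $n$; the content I want to extract is that this routing symbol is \emph{forced}, i.e.\ no pair with positive $\lambda$-measure can be routed through two different symbols — which is exactly what Theorem~\ref{them:uniqroute} (applied through $\mu = p_1(\lambda)$ and its analogue for $p_2$) and Theorem~\ref{thm:blockroute} together give.

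Next I would set up the bad set. Let $B$ be the set of $(x,x')\in X^2$ for which there exists an index $i$ with $\pi(x)_{[i,i+|w|-1]}=w$ but $x_{[i,i+|w|-1]}$ and $x'_{[i,i+|w|-1]}$ are \emph{not} routable through a common symbol of $M$ at time $n$. For a fixed $i$, the event ``$\pi(x)_{[i,i+|w|-1]}=w$, and the blocks at position $i$ in $x$ and $x'$ have no common routing symbol'' is a finite union of cylinders $\sigma^{-i}([u]\times[v])$ over the (finitely many) pairs $(u,v)$ projecting to $w$ with no common routing symbol in $M$ at time $n$; call this finite union $B_0$, so that $B = \bigcup_{i\in\Z}\sigma^{-i}B_0$ and, by shift-invariance of $\lambda$, it suffices to prove $\lambda(B_0)=0$, i.e.\ that $\lambda([u]\times[v])=0$ for every such pair $(u,v)$. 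But that is precisely the contrapositive of Theorem~\ref{thm:blockroute}: if $\lambda([u]\times[v])>0$ then $u$ and $v$ are routable through a common symbol of $M$ at time $n$. Hence each such cylinder has $\lambda$-measure zero, $\lambda(B_0)=0$, and therefore $\lambda(B)=0$ by countable subadditivity over $i\in\Z$. The complement of $B$ is exactly the set of $(x,x')$ asserted in the corollary.

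The only subtlety — and the step I would treat most carefully — is making sure Theorem~\ref{thm:blockroute} is applicable to every relevant pair, i.e.\ that $\lambda$ being a class diagonal joining over $\nu$ is exactly the hypothesis of that theorem, and that ``$(w,n,M)$ a $\nu$-minimal transition block'' is preserved so that the routing symbol genuinely lies in the \emph{same} set $M$ for all $i$ (this is automatic since $w$, $n$, and $M$ are fixed and the notion of routability of a block through a symbol of $M$ at time $n$ is a purely combinatorial, position-independent condition on the block). No recurrence argument is actually needed at the level of the corollary itself — Poincaré recurrence was already spent inside the proof of Theorem~\ref{thm:blockroute} — so the deduction here is the soft measure-theoretic step of passing from ``positive-measure cylinders behave well'' to ``almost every point behaves well at every coordinate.''
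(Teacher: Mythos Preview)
Your proof is correct and is essentially the same argument as the paper's, just written out more explicitly: the paper's one-line proof observes that for $\lambda$-almost every $(x,x')$ every $X^2$-block occurring in $(x,x')$ has positive $\lambda$-measure, which is exactly the contrapositive of your bad-set decomposition $B=\bigcup_i\sigma^{-i}B_0$ with $\lambda(B_0)=0$. Your careful unpacking of the countable-union step and the role of Theorem~\ref{thm:blockroute} matches the paper's reasoning precisely.
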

\begin{proof}
  For $\lambda$-almost every $(x,x')$, all $X^2$ blocks $(u,v)$ occurring in $(x,x')$ satisfy $\lambda([u]\times [v]) > 0$.
\end{proof}

\section{Relative entropy}

Given a probability space $(\Omega, \mathcal F, \mathbb P)$ and a measurable finite partition $\cC$ and a sub-$\sigma$-algebra $\mathcal D \subset \mathcal F$ and an event $A \in \mathcal F$ with $\mathbb P(A)>0$, we denote by $H(\cC | \cD | A)$ the conditional entropy of $\cC$ given $\cD$ with respect to the conditional measure on $A$. Given a discrete random variable $\hat x$ and a random variable $\hat y$ on $\Omega$ and an event $A \in \mathcal F$ with $\mathbb P(A)>0$, we denote by $H(\hat x| \hat y |A)$ the conditional entropy of $\hat x$ given $\hat y$ with respect to the conditional measure on $A$.

With three discrete random variables $\hat x, \hat y, \hat z$ and a positive event $A$, we have
$$H(\hat x | \hat y, \hat z | A) = \sum_z \mathbb P(\hat z=z | A) H(\hat x | \hat y | [Z=z] \cap A)$$
where z runs over values in the range of $\hat z$. (This follows easily by proving for the special case $A = \Omega$ first.)

If $A$ is an event measurable with respect to $\hat y$, then
\begin{align*}
  H(\hat x | \hat y) &= H(\hat x | \hat y, 1_A) \\
  &= \mathbb P(A) H(\hat x | \hat y | A) + \mathbb P(A^c) H(\hat x | \hat y | A^c)
\end{align*}
where $1_A$ is the indicator function of $A$. If $A$ is an event that is not measurable with respect to $\hat y$, then only the second equality from above is guaranteed.


\begin{lemma}\label{lem:goodE}
  Let $\hat x$ be a discrete random variable and E be an event that is measurable with respect to a random variable $\hat y$. Suppose there are $K+1$ Borel-measurable functions $f_0, \dots f_K$ such that $\hat x = f_0(\hat y)$ holds a.s. on the event $E^c$ and that $\hat x \in \{f_1(\hat y), \dots, f_K(\hat y) \}$ holds a.s. on the event $E$. Then
$$H(\hat x | \hat y) \le \Pr(E) \log K$$
\end{lemma}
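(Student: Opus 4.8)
The plan is to reduce to the case where $E$ itself is part of the conditioning data, so that on $E$ we genuinely have at most $K$ candidate values and on $E^c$ we have exactly one, and then apply the elementary bound that conditional entropy of a discrete random variable with $K$ possible values is at most $\log K$. Concretely, since $E$ is measurable with respect to $\hat y$, the indicator $1_E$ is a function of $\hat y$, and the decomposition recorded just before the statement gives
\[
  H(\hat x \mid \hat y) = \Pr(E)\, H(\hat x \mid \hat y \mid E) + \Pr(E^c)\, H(\hat x \mid \hat y \mid E^c).
\]
On $E^c$ we have $\hat x = f_0(\hat y)$ almost surely, so $\hat x$ is (a.s.) a function of $\hat y$ under the conditional measure on $E^c$, whence $H(\hat x \mid \hat y \mid E^c) = 0$ and the second term vanishes. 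It remains to bound the first term by $\log K$.

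For the first term, work under the conditional probability measure on $E$. There, almost surely $\hat x \in \{f_1(\hat y), \dots, f_K(\hat y)\}$, a set of at most $K$ elements (for each fixed value of $\hat y$). Define a discrete random variable $\hat j$ taking values in $\{1,\dots,K\}$ by letting $\hat j$ be the least index $j$ with $\hat x = f_j(\hat y)$; this is well-defined almost surely on $E$ and is a function of the joint variable $(\hat x,\hat y)$. Since $\hat x$ is a function of $(\hat y, \hat j)$ — namely $\hat x = f_{\hat j}(\hat y)$ a.s. on $E$ — we get $H(\hat x \mid \hat y \mid E) \le H(\hat x, \hat j \mid \hat y \mid E) = H(\hat j \mid \hat y \mid E) \le H(\hat j \mid E) \le \log K$, using in order monotonicity of entropy under refinement, the fact that adding a function of the conditioning-and-given data does not change conditional entropy, the standard inequality $H(\cdot \mid \cdot) \le H(\cdot)$, and finally that a discrete random variable with at most $K$ values has entropy at most $\log K$. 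Combining, $H(\hat x \mid \hat y) = \Pr(E)\, H(\hat x \mid \hat y \mid E) \le \Pr(E)\log K$, as claimed.

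I do not anticipate a serious obstacle here; the only point requiring a little care is the measurability of $\hat j$ and hence the legitimacy of treating it as a bona fide discrete random variable. Since each $f_i$ is Borel and $\hat x, \hat y$ are random variables in the paper's sense (a.e.-defined measurable maps into Polish spaces), the events $\{\hat x = f_i(\hat y)\}$ are measurable, so the "least index" construction yields a measurable $\hat j$; the exceptional null sets where none of the relations hold can be absorbed harmlessly. One could alternatively avoid $\hat j$ entirely by noting that on $E$, conditionally on $\hat y$, the variable $\hat x$ is supported on a set of size $\le K$, so its conditional entropy given $\hat y$ is pointwise $\le \log K$, and then integrate; but the $\hat j$ formulation keeps everything within the finitary entropy calculus already set up in this section.
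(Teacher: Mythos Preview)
Your argument is correct and is exactly the intended one: the paper does not supply a proof of this lemma, but the decomposition identity displayed immediately before it (valid because $E$ is $\hat y$-measurable) is set up precisely so that one splits $H(\hat x\mid\hat y)$ into the $E$ and $E^c$ pieces, kills the $E^c$ piece via $\hat x=f_0(\hat y)$, and bounds the $E$ piece by $\log K$ since $\hat x$ lies in a $\hat y$-measurable set of size at most $K$. Your $\hat j$ device and the alternative ``pointwise $\le\log K$ then integrate'' remark are both clean ways to justify that last bound, and the measurability check you flag is handled correctly.
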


For each $0 \le p \le 1$, denote $H_p = - p \log p - (1-p) \log (1-p) \ge 0$.

\begin{lemma}\label{lem:badE}
  Let $\hat x$ be a discrete random variable and E be an event. Let $\hat y$ be a random variable. Suppose there are $K+1$ Borel-measurable functions $f_0, \dots f_K$ such that $\hat x = f_0(\hat y)$ holds a.s. on the event $E^c$ and that $\hat x \in \{f_1(\hat y), \dots, f_K(\hat y) \}$ holds a.s. on the event $E$. Then
  $$H(\hat x | \hat y) \le \Pr(E) \log K + H_{\Pr(E)} $$
\end{lemma}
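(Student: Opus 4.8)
The plan is to reduce to the situation of Lemma~\ref{lem:goodE} by adjoining the indicator $1_E$ to the conditioning variable, paying for this with a single extra term. The point is that $E$ is not assumed to be measurable with respect to $\hat y$, so the clean identity $H(\hat x | \hat y) = H(\hat x | \hat y, 1_E)$ from the discussion preceding the lemma is no longer available; instead one has only an inequality, and that inequality is exactly what produces the $H_{\Pr(E)}$ correction.

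First I would write down the chain-rule estimate
$$H(\hat x | \hat y) \le H(\hat x | \hat y) + H(1_E | \hat x, \hat y) = H(\hat x, 1_E | \hat y) = H(1_E | \hat y) + H(\hat x | \hat y, 1_E),$$
valid for any discrete $\hat x$ and any event $E$. Since $1_E$ is a single binary random variable, $H(1_E | \hat y) \le H(1_E) = H_{\Pr(E)}$, so it remains to bound $H(\hat x | \hat y, 1_E)$ by $\Pr(E)\log K$.

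Then I would estimate $H(\hat x | \hat y, 1_E)$ using the decomposition identity recalled earlier in the section, applied with conditioning variable $(\hat y,1_E)$ and $A = \Omega$ (equivalently, with $\hat z = 1_E$):
$$H(\hat x | \hat y, 1_E) = \Pr(E)\, H(\hat x | \hat y | E) + \Pr(E^c)\, H(\hat x | \hat y | E^c).$$
On $E^c$ we have $\hat x = f_0(\hat y)$ almost surely, so $H(\hat x | \hat y | E^c) = 0$; on $E$ we have $\hat x \in \{f_1(\hat y),\dots,f_K(\hat y)\}$ almost surely, so conditioned on $\hat y$ the variable $\hat x$ assumes at most $K$ values, whence $H(\hat x | \hat y | E) \le \log K$. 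Therefore $H(\hat x | \hat y, 1_E) \le \Pr(E)\log K$. Combining this with the chain-rule estimate gives $H(\hat x | \hat y) \le \Pr(E)\log K + H_{\Pr(E)}$, as claimed. I expect no genuine obstacle here: the argument is essentially the proof of Lemma~\ref{lem:goodE} with one additional bookkeeping step, the only delicate point being to route the non-$\hat y$-measurable event $E$ through the conditioning rather than through an exact additive decomposition of $H(\hat x|\hat y)$.
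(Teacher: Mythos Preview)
Your argument is correct. The paper does not actually supply a proof of Lemma~\ref{lem:badE}; it simply states the lemma after recording the two identities you use (the chain rule $H(\hat x,1_E|\hat y)=H(1_E|\hat y)+H(\hat x|\hat y,1_E)$ and the decomposition $H(\hat x|\hat y,1_A)=\Pr(A)H(\hat x|\hat y|A)+\Pr(A^c)H(\hat x|\hat y|A^c)$, valid even when $A$ is not $\hat y$-measurable). Your proof is exactly the argument those preliminaries are set up for, so it matches the paper's intended route.
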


Given finite partitions $\alpha, \beta$ on a measure-theoretic dynamical system of finite entropy, the following quantities are all equal.
\begin{itemize}
\item $H(\alpha | \alpha_1^\infty \vee \beta_{-\infty}^\infty)$
\item $\lim_n H(\alpha | \alpha_1^n \vee \beta_{-\infty}^\infty)$
\item $\lim_n \frac1n H(\alpha_0^{n-1} | \beta_{-\infty}^\infty)$
\item the metric entropy of the factor system $\alpha_{-\infty}^{\infty} \vee \beta_{-\infty}^{\infty}$ minus the metric entropy of the factor system $\beta_{-\infty}^{\infty}$
\end{itemize}

We denote by $h(\alpha_{-\infty}^\infty | \beta_{-\infty}^\infty)$ this quantity. If a random variable $\hat x$ (resp. $\hat y$) generates $\alpha_{-\infty}^\infty$ (resp. $\beta_{-\infty}^\infty$) for some finite partition $\alpha$ (resp. $\beta$), then we write $h(\hat x | \hat y) = h(\alpha_{-\infty}^\infty | \beta_{-\infty}^\infty)$.

We have the following subadditive property of relative entropy.

\begin{align*}
  h(\alpha_{-\infty}^\infty | \gamma_{-\infty}^\infty)  &\le h(\alpha_{-\infty}^\infty \vee \beta_{-\infty}^\infty | \gamma_{-\infty}^\infty) \\
  &= h(\alpha_{-\infty}^\infty | \beta_{-\infty}^\infty \vee \gamma_{-\infty}^\infty) + h(\beta_{-\infty}^\infty | \gamma_{-\infty}^\infty)
\end{align*}


\section{Jump extension}

Throughout this section, let $\mu$ be an invariant measure on a subshift $X$, and $A$ a spanning subset of $X$ with respect to $\mu$, in other words,
$$\mu(\cup_{i \in \mathbb Z}\sigma^{i}(A)) = 1$$ and hence by Poincare's recurrence
$$\mu\{ x \in X : \sigma^ix \in A \text{ for bi-infinitely many } i  \}=1$$

Throughout this section, also let $\eta$ be an invariant measure on $C^{\Z}$ and assume $0 \not\in C$. Let $D$ be the disjoint union of $C$ and $\{0\}$. Then there is an extension $(X \times D^{\mathbb Z}, \bar\mu, \sigma)$ of the system $(X, \mu, \sigma)$ with the following properties.
\begin{itemize}
\item $\bar\mu$ is an invariant measure on $X \times D^{\mathbb Z}$ that projects to $\mu$. (This property is just another way of saying that $(X \times D^{\mathbb Z}, \bar\mu, \sigma)$ is an extension).
\item For $\bar\mu$-almost every $(x, t)$, for all $i \in \mathbb Z$, $\sigma^i x \in A$ if and only if $t_i \ne 0$.
\item If $q$ is a measurable function from $X$ to $\mathbb Z$ such that $\sigma^{q(x)}(x) \in A$ holds for $\mu$-almost every $x$, then $g_q(\bar\mu) = \mu\times\eta$ where $g_q$ is a $\bar\mu$-almost everywhere defined measurable function from $X \times D^{\mathbb Z}$ to $X \times C^{\mathbb Z}$ defined by $g_q(x,t) = (x, (t_{q_k(x)})_k)$, where $$ \dots < q_{-1}(x) < q_0(x) = q(x) < q_1(x) < q_2(x) < \dots$$ are all the coordinates $i$ for which $\sigma^i(x) \in A$.
\end{itemize}

We call the extension $(X \times D^{\mathbb Z}, \bar\mu, \sigma)$ (or just the measure $\bar\mu$) the \emph{jump extension} of $(X, \mu, \sigma)$ with respect to $A$ and $\eta$.

\begin{theorem}\label{thm:jumpentropy}
  The entropy of the jump extension is $$h(\bar\mu) = h(\mu) + \mu(A)h(\eta)$$
\end{theorem}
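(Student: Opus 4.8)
The plan is to compute the entropy $h(\bar\mu)$ by exhibiting a generating structure for the jump extension and applying the Abramov–Rokhlin type formula for entropy relative to a factor, together with the induced/return map (Abramov) formula. Concretely, $(X\times D^{\mathbb Z},\bar\mu,\sigma)$ is an extension of $(X,\mu,\sigma)$, so $h(\bar\mu)=h(\mu)+h(\bar\mu\mid\mu)$, where $h(\bar\mu\mid\mu)$ denotes the relative entropy of the extension over its factor $X$, in the sense of the section on relative entropy above. It therefore suffices to prove that this relative term equals $\mu(A)h(\eta)$.

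First I would set up the natural partitions. Let $\beta$ be a finite partition of $X$ that generates (if $X$ has infinite alphabet one works with an increasing sequence of finite partitions and passes to a limit, but in our SFT setting the state partition suffices), pulled back to $X\times D^{\mathbb Z}$; and for a fixed finite partition $\alpha$ of $C^{\mathbb Z}$ generating under $\eta$, let $\tilde\alpha$ be the partition of $X\times D^{\mathbb Z}$ by the time-$0$ $D$-coordinate refined by $\alpha$ read in the $D$-symbol — more precisely, refine the partition $\{t_0=0\}$ versus $\{t_0\ne 0\}$ by $\alpha(t_0)$ on the latter piece. Then $\tilde\alpha\vee\beta$ generates $\bar\mu$ (the defining property that $\sigma^ix\in A \iff t_i\ne 0$ pins down the $0$/non-$0$ pattern of $t$ from $x$, and the remaining content of $t$ is exactly an $\eta$-distributed sequence reindexed along the visit times to $A$, recoverable from the $\tilde\alpha$-names). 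Hence
\begin{align*}
  h(\bar\mu\mid\mu) &= h\big(\tilde\alpha_{-\infty}^{\infty}\vee\beta_{-\infty}^{\infty}\,\big|\,\beta_{-\infty}^{\infty}\big)
  = H\big(\tilde\alpha\,\big|\,\tilde\alpha_1^{\infty}\vee\beta_{-\infty}^{\infty}\big),
\end{align*}
using the list of equal quantities from the relative-entropy section.

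The heart of the argument is then to evaluate $H(\tilde\alpha\mid\tilde\alpha_1^{\infty}\vee\beta_{-\infty}^{\infty})$ and identify it with $\mu(A)h(\eta)$. Conditioning on the full $\beta$-name means conditioning on the point $x$, hence on whether $\sigma^0x\in A$, i.e.\ on the event $A$ (pulled back) which is $\beta_{-\infty}^\infty$-measurable. On $A^c$ the $0$-coordinate of $t$ is deterministically $0$, contributing nothing; on $A$, the conditional distribution of $t_0$ (equivalently $\alpha(t_0)$) given $\tilde\alpha_1^\infty\vee\beta_{-\infty}^\infty$ is, by the third defining property of the jump extension (the pushforward under $g_q$ being $\mu\times\eta$), exactly the conditional distribution of the $\eta$-process's time-$0$ symbol given its own future — because reindexing by the visit times to $A$ turns "the next $t$-coordinates that are non-zero, together with the knowledge of $x$" into "the future of the $\eta$-process". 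Splitting the conditional entropy over the event $A$ as in the displayed identity $H(\hat x\mid\hat y)=\mathbb P(A)H(\hat x\mid\hat y\mid A)+\mathbb P(A^c)H(\hat x\mid\hat y\mid A^c)$, the second term vanishes and the first equals $\mu(A)\,H(\alpha\mid\alpha_1^\infty)=\mu(A)h(\eta)$ after letting $\alpha$ generate. Summing, $h(\bar\mu)=h(\mu)+\mu(A)h(\eta)$.

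The step I expect to be the main obstacle is the rigorous identification of the conditional distribution on $A$: one must check carefully that conditioning on $\tilde\alpha_1^\infty\vee\beta_{-\infty}^\infty$ really does correspond, under the measure isomorphism implicit in $g_q(\bar\mu)=\mu\times\eta$ (taking $q$ to be the first nonnegative visit time to $A$), to conditioning the $\eta$-factor on its strict future, with no leakage of information from the $X$-coordinate about $t_0$ beyond membership in $A$. This is where the precise form of the jump extension — that $t$ restricted to visit times is independent of $x$ with law $\eta$, and that the $0$-pattern of $t$ is a function of $x$ — is used in full. A secondary technical point is the passage from finite generating partitions $\alpha$ to the limit defining $h(\eta)$, and the analogous limit in $\beta$ if $X$'s alphabet is taken infinite; both are routine monotone limits of conditional entropies and I would only remark on them.
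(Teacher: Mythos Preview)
The paper states this theorem without proof, so there is nothing in the text to compare your argument against directly. Your approach is the natural one and is sound: the jump extension is an extension of $(X,\mu,\sigma)$, so the Abramov--Rokhlin decomposition $h(\bar\mu)=h(\mu)+h(\bar\mu\mid\mu)$ reduces the claim to showing the relative term equals $\mu(A)h(\eta)$. Splitting $H(\tilde\alpha\mid\tilde\alpha_1^\infty\vee\beta_{-\infty}^\infty)$ over the $\beta_{-\infty}^\infty$-measurable event $A$, observing that the contribution on $A^c$ vanishes because $t_0=0$ there, and identifying the contribution on $A$ with $H(\alpha\mid\alpha_1^\infty)$ for $\eta$ via the product structure $g_q(\bar\mu)=\mu\times\eta$, is exactly the right line.

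You have also correctly located the one genuinely nontrivial step: verifying that, on $A$, the conditional law of $t_0$ given $\beta_{-\infty}^\infty\vee\tilde\alpha_1^\infty$ coincides with the conditional law of the $\eta$-process at time $0$ given its strict future. This follows because, once $x$ is known, the visit times $q_k$ are known, the non-zero coordinates of $t_{[1,\infty)}$ are exactly $(t_{q_k})_{k\ge 1}$, and the independence of $(t_{q_k})_{k\in\Z}$ from $x$ is precisely the content of $g_q(\bar\mu)=\mu\times\eta$; no information about $t_{q_0}$ leaks through $x$. Two minor remarks: $A$ is only assumed Borel, so you do need the full $\beta_{-\infty}^\infty$ (not a finite join) to make $A$ measurable in the conditioning, which you already have; and in the paper's application $C=\{1,2,3\}$ is finite, so the time-$0$ state partition of $C^{\Z}$ already generates and no limiting argument in $\alpha$ is required.
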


\begin{lemma}\label{lem:t0}
  Let $C'$ be a subset of $C$ and let $B$ be a measurable subset of $X$. Then
  $$\bar\mu\{(x,t): x \in B, t_0 \in C'\} = \mu(B \cap A)\eta([C'])$$
  where $[C']$ denotes the cylinder $\{z \in C^\Z: z_0 \in C'\}$.
\end{lemma}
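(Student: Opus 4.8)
The plan is to reduce the statement to the third defining property of the jump extension by choosing the jump function $q$ appropriately. First I would observe that, modulo $\bar\mu$, the event $\{t_0 \in C'\}$ is contained in the event $\{x \in A\}$: since $C' \subseteq C$ and $0 \notin C$, having $t_0 \in C'$ forces $t_0 \neq 0$, and by the second bullet in the definition of the jump extension this means $\sigma^0 x = x \in A$ for $\bar\mu$-almost every such $(x,t)$. Hence the events $\{(x,t): x \in B,\ t_0 \in C'\}$ and $\{(x,t): x \in B \cap A,\ t_0 \in C'\}$ differ by a $\bar\mu$-null set, so
$$\bar\mu\{(x,t): x \in B,\ t_0 \in C'\} = \bar\mu\{(x,t): x \in B \cap A,\ t_0 \in C'\},$$
and it suffices to compute the right-hand side.

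Next I would pick a measurable $q : X \to \mathbb Z$ with $\sigma^{q(x)}(x) \in A$ for $\mu$-almost every $x$ and, crucially, with $q(x) = 0$ whenever $x \in A$; for instance the first nonnegative return time $q(x) = \min\{\, i \ge 0 : \sigma^i x \in A \,\}$, which is defined $\mu$-a.e. because $A$ is spanning (so for a.e.\ $x$ there are infinitely many positive $i$ with $\sigma^i x \in A$) and equals $0$ precisely on $A$. For this $q$ the third bullet gives $g_q(\bar\mu) = \mu \times \eta$, where $g_q(x,t) = (x, (t_{q_k(x)})_k)$ with $q_0(x) = q(x)$. On the event $\{x \in A\}$ we have $q_0(x) = 0$, so the zeroth coordinate of $g_q(x,t)$ equals $t_0$; and since $g_q$ preserves the $X$-coordinate, this shows that modulo $\bar\mu$
$$\{(x,t): x \in B\cap A,\ t_0 \in C'\} = g_q^{-1}\bigl( (B \cap A) \times [C'] \bigr).$$

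Finally I would push this identity through $g_q$: using $g_q(\bar\mu) = \mu \times \eta$,
$$\bar\mu\{(x,t): x \in B\cap A,\ t_0 \in C'\} = (\mu \times \eta)\bigl( (B \cap A) \times [C'] \bigr) = \mu(B \cap A)\,\eta([C']),$$
which combined with the reduction in the first paragraph is exactly the claim. I do not expect a serious obstacle here; the only points needing care are the measurability and a.e.-definedness of the chosen $q$ (supplied by the spanning hypothesis on $A$) and the bookkeeping that the zeroth coordinate of $g_q(x,t)$ is $t_0$ on $\{x\in A\}$, which becomes immediate once $q$ is normalized to vanish on $A$.
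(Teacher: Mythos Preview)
Your proof is correct and follows essentially the same approach as the paper: both choose $q(x)=\min\{i\ge 0:\sigma^i x\in A\}$, use the second defining property to see that $\{t_0\in C'\}\subset\{x\in A\}=\{q(x)=0\}$ modulo $\bar\mu$, and then identify the event in question with $g_q^{-1}\bigl((B\cap A)\times[C']\bigr)$ to invoke the third defining property. The paper's version is just a terser rendering of the same argument.
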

\begin{proof}
  For $\mu$-almost every $x$, define $q(x)$ to be the smallest nonnegative integer with $\sigma^{q(x)}(x) \in A$. Note that
  $$ \{(x,t): t_0 \neq 0\} = \{(x,t): x \in A \} = \{(x,t): q(x) = 0\} \pmod{\bar\mu}$$
  So we can conclude
  $$\{(x,t): x \in B, t_0 \in C'\} = \{(x,t): g_q(x,t) \in (B\cap A)\times [C']\} \pmod{\bar\mu}$$
\end{proof}
As a special case, we get the following corollary.
\begin{corollary}\label{lem:t0special}
  Let $C'$ be a subset of $C$. Then
  $$\bar\mu\{(x,t): t_0 \in C'\} = \mu(A)\eta([C'])$$
\end{corollary}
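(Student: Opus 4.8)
The plan is to obtain this as the immediate specialization of Lemma~\ref{lem:t0}. That lemma gives, for every measurable $B \subset X$ and every $C' \subset C$, the identity
$$\bar\mu\{(x,t): x \in B,\ t_0 \in C'\} = \mu(B \cap A)\,\eta([C']).$$
I would simply take $B = X$. Then the event $\{(x,t): x \in X,\ t_0 \in C'\}$ is literally $\{(x,t): t_0 \in C'\}$, and on the right-hand side $\mu(X \cap A) = \mu(A)$, so the claimed formula $\bar\mu\{(x,t): t_0 \in C'\} = \mu(A)\,\eta([C'])$ drops out with no further work.

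There is essentially no obstacle here; the content is entirely in Lemma~\ref{lem:t0}. If one wanted a self-contained argument instead of invoking that lemma, the same proof strategy applies directly: let $q(x)$ be the smallest nonnegative integer with $\sigma^{q(x)}(x) \in A$ (defined for $\mu$-a.e.\ $x$ since $A$ is spanning), observe that modulo $\bar\mu$ we have $\{(x,t): t_0 \neq 0\} = \{(x,t): x \in A\} = \{(x,t): q(x) = 0\}$, and hence $\{(x,t): t_0 \in C'\} = \{(x,t): g_q(x,t) \in A \times [C']\} \pmod{\bar\mu}$. Since $g_q(\bar\mu) = \mu \times \eta$ by the defining property of the jump extension, the measure of this set equals $(\mu\times\eta)(A \times [C']) = \mu(A)\,\eta([C'])$. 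The only minor points to be careful about are that $q$ is well defined $\mu$-a.e.\ and that the set identities above hold modulo $\bar\mu$-null sets, both of which are exactly as in the proof of Lemma~\ref{lem:t0}.

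Given the brevity, I would present the corollary's proof in one line: apply Lemma~\ref{lem:t0} with $B = X$ and use $\mu(X \cap A) = \mu(A)$.
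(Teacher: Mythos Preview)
Your proposal is correct and matches the paper's approach exactly: the paper states this corollary as an immediate special case of Lemma~\ref{lem:t0}, which is precisely your one-line argument with $B = X$.
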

  

\section{Proof of the main theorem}

\begin{lemma}
  Let $(X,Y,\pi)$ be a factor triple and $\nu$ an ergodic measure on $Y$. Let $V$ be a function on $X$ with summable variation. Let $\lambda$ be a class diagonal joining of distinct ergodic measures $\mu_1,\mu_2$ over $\nu$. Then there is another relative joining $\lambda'$ on $X^2$ over $\nu$ such that
$$ h(\lambda') + \mu_1'(V) + \mu_2'(V) > h(\lambda) + \mu_1(V) + \mu_2(V) $$
where $\mu'_1 = p_1(\lambda')$ and $\mu'_2 = p_2(\lambda')$. 
\end{lemma}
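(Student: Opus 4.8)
The idea is to implement the "biased coin at every $N$-th minimal transition block" construction described in the introduction, using the jump extension machinery of Section 7 and the measure-theoretic routing results (Theorem~\ref{thm:blockroute}, Corollary~\ref{cor:pointroute}) as the combinatorial engine. First I would reduce to a 1-step 1-block factor triple, since topological conjugacy preserves all the quantities involved, and fix a $\nu$-minimal transition block $(w,n,M)$. If $\lambda(D_2) < 1$ then $\mu_1 = \mu_2$ would be forced to fail the hypothesis (distinctness), so in fact we may assume $\lambda$ is a class diagonal joining and, by Corollary~\ref{cor:pointroute}, that for $\lambda$-a.e.\ $(x,x')$ every occurrence of $w$ in $\pi(x)$ routes $x_{[i,i+|w|-1]}$ and $x'_{[i,i+|w|-1]}$ through a \emph{common} symbol in $M$ at time $n$. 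This common-symbol property is what makes the splicing legal: whenever we are at an occurrence of $w$ we may freely switch which of the two points we are following, and the result still lies in $X$ (using that $X$ is 1-step) and still projects to $\nu$.

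Next I would set up the randomization. Let $A \subset X^2$ be the (invariant, $\lambda$-spanning after we pass to the subset where $w$ recurs bi-infinitely) set of $(x,x')$ with $\pi(x)_{[0,|w|-1]} = w$; by Poincaré recurrence $w$ recurs bi-infinitely $\lambda$-a.e., so $A$ is spanning. Using the jump extension of $(X^2,\lambda,\sigma)$ with respect to $A$ and a Bernoulli measure $\eta$ on $\{\mathrm{stay},\mathrm{swap}\}^{\Z}$ with $\eta(\mathrm{swap}) = \varepsilon$, I form $\bar\lambda$ on $X^2 \times D^{\Z}$. The coin sequence tells us, at each occurrence of $w$, whether to swap the two coordinates from that point onward or not; accumulating these instructions defines two new point-valued random variables $\hat z, \hat z'$ on $X^2$, each a measurable function of $(x,x',t)$, and their joint distribution is a new relative joining $\lambda'$ over $\nu$ (projecting to $\nu$ because $\pi(\hat z) = \pi(\hat z') = \pi(x)$ always). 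Set $\mu_i' = p_i(\lambda')$.

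Now the two competing estimates. For the integral side, the standard Antonioli-type bound: $\mu_1'(V) + \mu_2'(V)$ differs from $\mu_1(V) + \mu_2(V)$ by at most $C \cdot \mu_{\pi}(w) \cdot \varepsilon$ for a constant $C$ depending only on the summable-variation norm of $V$ and on $(w,n,M)$ — because a swap only alters the two points near the swap location, and swaps happen with frequency $\le \varepsilon\, \mu_\pi(w)$. For the entropy side, I want $h(\lambda') \ge h(\lambda) + c\, \mu_\pi(w)\, H_\varepsilon - (\text{small})$, where $H_\varepsilon = -\varepsilon\log\varepsilon - (1-\varepsilon)\log(1-\varepsilon)$. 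This is where the "every $N$-th occurrence" device and the main ingredient enter: to recover the coin flips (hence the extra entropy $\mu_\pi(w) H_\varepsilon$ guaranteed by Theorem~\ref{thm:jumpentropy}) from the new pair $(\hat z,\hat z')$, I need to detect, from long stretches of $\hat z$, which original measure $\mu_1$ or $\mu_2$ that stretch came from. Since $\mu_1 \ne \mu_2$ are ergodic, there is a block-coding decision rule with error probability $\to 0$ in the block length; but its speed is uncontrolled, so I only toss a genuine biased coin at every $N$-th occurrence of $w$ (staying put in between), choosing $N$ large enough — in response to that error rate — that the decoder sees a stretch long enough to identify the source with error $o(H_\varepsilon/\text{const})$. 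Then Lemma~\ref{lem:badE} (or \ref{lem:goodE}) bounds the entropy lost to decoding errors by something negligible compared to the gain, and $h(\lambda')$ exceeds $h(\lambda)$ by at least, say, $\tfrac12 c\,\mu_\pi(w)\, H_\varepsilon/N$ worth of honest coin entropy minus the error term.

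Finally, I compare the two sides as $\varepsilon \to 0$ with $N = N(\varepsilon)$ chosen as above: since $H_\varepsilon/\varepsilon \to \infty$, for small enough $\varepsilon$ the entropy gain $\sim \mu_\pi(w) H_\varepsilon / N$ dominates the integral loss $\sim C\mu_\pi(w)\varepsilon$ — \emph{provided} $N$ does not grow too fast, which is the crux: I must argue that the decoder's required block length, as a function of the target error level, can be absorbed, e.g.\ by first fixing a block length $\ell$ giving decoding error $\delta_\ell$, then taking $\varepsilon$ small with $N \approx \ell$, so that the per-coin entropy contribution is $\approx \mu_\pi(w) H_\varepsilon/\ell$ against loss $\approx C\mu_\pi(w)\varepsilon$ and errors $\approx \delta_\ell \log(\text{alphabet})$; one first makes $\delta_\ell$ small, then makes $\varepsilon$ small. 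I expect the main obstacle to be exactly this bookkeeping — making the choice of $N$ versus $\varepsilon$ rigorous while keeping the entropy accounting (which mixes the jump-extension formula, the subadditivity of relative entropy, and the error-correction Lemmas \ref{lem:goodE}--\ref{lem:badE}) clean enough that the inequality $h(\lambda') + \mu_1'(V) + \mu_2'(V) > h(\lambda) + \mu_1(V) + \mu_2(V)$ falls out. Once the lemma is in hand, the main theorem follows by the pigeonhole argument sketched in the introduction: $c_\nu + 1$ distinct ergodic relative equilibrium states over $\nu$ would yield, via a relatively independent joining and Theorem~\ref{thm:classdeg}, two of them admitting a class diagonal joining, and the lemma then produces a fiber measure beating the equilibrium value, a contradiction.
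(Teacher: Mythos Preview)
Your plan is the paper's proof: reduce to the 1-step 1-block case, take the jump extension of $\lambda$ over the $w$-return set with a biased coin that is live only at every $N$-th visit, splice $(\hat x,\hat x')$ into $(\hat z,\hat z')$ using the bridge words supplied by Theorem~\ref{thm:blockroute} and Corollary~\ref{cor:pointroute}, and then balance the injected coin entropy against the decoding loss and the integral defect. The only substantive slip is in your final accounting and the resulting order of quantifiers.

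You record the entropy gain as $\mu_\pi(w)H_\varepsilon/N$ but the integral loss as $C\mu_\pi(w)\varepsilon$, and then worry that ``$N$ must not grow too fast''. In fact the integral loss carries the same factor $1/N$: swaps occur only at coin tosses, coin tosses occur only at every $N$-th $w$-visit, so the swap frequency is $\nu(w)\varepsilon/N$ and the Antonioli bound gives $\lvert \mu_1'(V)+\mu_2'(V)-\mu_1(V)-\mu_2(V)\rvert \le C_5\,\varepsilon/N$. The same $1/N$ appears in the term you did not isolate, namely $h(\hat x,\hat x'\mid \hat t,\hat z,\hat z')$: given $(\hat t,\hat z,\hat z')$ one recovers $(\hat x,\hat x')$ everywhere except on the bridge windows of length $|w|$, which occupy a fraction $\le C_1\,\varepsilon/N$ of coordinates, so Lemma~\ref{lem:goodE} bounds this term by a constant times $\varepsilon/N$ as well. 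Hence all competitors scale the same way and one obtains
\[
\Delta \ \ge\ \frac{C_3\,H_\varepsilon \ -\ C_4\,H^*(N)\ -\ C_5\,\varepsilon}{N},
\]
whose sign is independent of $N$; there is no ``$N$ too large'' obstruction. This also shows that your stated order (``first make $\delta_\ell$ small, then make $\varepsilon$ small'') is backwards and would fail, since $H_\varepsilon\to 0$ as $\varepsilon\to 0$ while $\delta_\ell$ stays fixed. The correct order, and the one the paper uses, is: first fix $\varepsilon$ small enough that $C_3 H_\varepsilon - C_5\varepsilon > 0$ (possible because $H_\varepsilon/\varepsilon\to\infty$), and \emph{then} take $N$ large enough that the decoding-error term $H^*(N)$ falls below that fixed positive margin.
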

\begin{proof}

For each $(N,p) \in \mathbb N \times (0, \frac12)$, first define a (non-invariant) measure $\eta^o = \eta^o_{(N,p)}$ on $\{1,2,3\}^{\mathbb Z}$: for each $i \in \Z$, $\eta^o([1]_{iN}) = 1-p$, $\eta^o([2]_{iN}) = p$, $\eta^o([3]_{iN}) = 0$ and for each $k$ not a multiple of $N$,  $\eta^o([3]_k) = 1$ and the measure $\eta^o$ makes each coordinate independent. Define the invariant measure $\eta = \eta_{(N,p)}$ on $\{1,2,3\}^{\mathbb Z}$ by:
$$\eta = \frac1N \sum_{k=0}^{N-1} \sigma^k(\eta^o)$$

The invariant measure $\eta_{(N,p)}$ satisfies the following properties.
\begin{itemize}
\item $\eta$-almost every point is concatenation of blocks of length $N$ that are either $13^{N-1}$ or $23^{N-1}$
\item Its entropy is $h(\eta) = \frac1N H_p$
\item $\eta(1) = \frac{1-p}{N}$
\item $\eta(2) = \frac{p}{N}$
\item $\eta(13^{N-1}2) = \frac{p(1-p)}{N} = \eta(23^{N-1}1)$
\end{itemize}

We may assume $(X,Y,\pi)$ is a 1-step 1-block factor triple. Let $(w,n,M)$ be a $\nu$-minimal transition block. Let $(N,p) \in \mathbb N \times (0, \frac12)$ be such that $N > |w|$. The value of $(N,p)$ will be determined later.

Let $\bar\lambda$ be the jump extension of $\lambda$ with respect to $(\pi\circ p_1)^{-1}[w]$ and $\eta_{(N,p)}$.
We can form the jump extension because $(\pi\circ p_1)^{-1}[w]$ is spanning: in fact, $\lambda$-almost every point visits $(\pi\circ p_1)^{-1}[w]$ with frequency given by $\nu(w) > 0$ because $\nu$ is ergodic.
$\bar\lambda$ is an invariant measure on $\Omega = X^2 \times \{0,1,2,3\}^{\mathbb Z}$. The measure-theoretic dynamical system $(\Omega, \bar\lambda, \sigma)$ is the ambient probability space on which we will build our random variables.

Let $\hat x, \hat x', \hat t$ be random variables defined on $(\Omega, \bar\lambda)$ by
$$ \hat x(x,x', t) = x $$
$$ \hat x'(x,x', t) = x' $$
$$ \hat t(x,x',t) = t $$

Since the distribution of the joint random variable $(\hat x, \hat x')$ is the relative joining $\lambda$, we can define another random variable $\hat y = \pi(\hat x) = \pi(\hat x')$ which has distribution $\nu$.
The jump extension ensures that for each $i$, the event $\hat t_i >0$ is the same as the event $\sigma^i(\hat y) \in [w]$. In other words, $\hat t$ is a sequence in which nonzero symbols occur exactly where the word $w$ occurs in $\hat y$.

We have so far four random variables: $\hat x, \hat x', \hat t, \hat y$. We now want to construct two more random variables $\hat z, \hat z'$ such that $\pi(\hat z) = \hat y = \pi(\hat z')$ and they will be formed by taking some segments from $\hat x, \hat x'$ in some way. We define $\hat z$ first. It will be defined in such a way that $\hat z$ is a function of $\hat x, \hat x', \hat t$. Occurrence of the symbol 1 in $\hat t$ will mean: take from the first path, namely, $\hat x$. The symbol 2 will mean: take from the second path, namely, $\hat x'$. The other symbols 3 and 0 have no meaning.

The point $\hat z(x,x',t) \in X$ is defined for $\bar\lambda$-almost every $(x,x',t)$ in the following way. Let $\dots i_{-1} < i_0 < i_1 < \dots$ be all the places where 1 or 2 occurs in $t$. (One can think of each $i_j$ to be a integer-valued function defined almost everywhere on $\Omega$ if preferred) Note that $i_{j+1} - i_j \ge N > |w|$ holds for each $j$ (almost everywhere) because if we remove zeros from the block $t_{[i_j, i_{j+1}-1]}$ we would get either $13^{N-1}$ or $23^{N-1}$. This means that we can divide the region $[i_j, i_{j+1}-1]$ into two subregions $[i_j, i_j+|w|-1]$ and $[i_j+|w|, i_{j+1}-1]$.

We define $\hat z(x,x',t)$ for the latter type of subregions first. The value of $\hat z$ on those subregions are copied from $x$ or $x'$ depending on what $t$ tells at $i_j$, in other words:
\[
\hat z(x,x',t)_{[i_j+|w|, i_{j+1}-1]} =
\begin{cases}
  x_{[i_j+|w|, i_{j+1}-1]} & \text{if } t_{i_j} = 1 \\
  x'_{[i_j+|w|, i_{j+1}-1]} & \text{if } t_{i_j} = 2
\end{cases}
\]

For the former type of subregions, note that for each of such subregion, the block $w$ appears in $\hat y(x,x',t)$ at that subregion. Since $\lambda$ is class diagonal, Corollary~\ref{cor:pointroute} ensures that for each of these subregions, the two blocks from $x, x'$ at that subregion are routable through a common symbol. Theorem~\ref{thm:blockroute} ensures that for each $X^2$-block $(u,v)$ that projects to $w$ such that $\lambda([u]\times [v]) >0$, one can choose an $X$-block $r^{12}(u,v)$ that projects to $w$ and starts with the symbol $u_0$ and ends with the symbol $v_{|w|-1}$. We also choose $r^{21}(u,v)$ that projects to $w$ and starts with the symbol $v_0$ and ends with the symbol $u_{|w|-1}$. We also define $r^{11}(u,v) = u$ and $r^{22}(u,v) = v$.

Now define $\hat z(x,x',t)$ for the former type of subregions by using the functions $r^{11},r^{12},r^{21},r^{22}$ depending on what $t$ is telling at $i_{j-1}$ and $i_j$, in other words:
\[
\hat z(x,x',t)_{[i_j, i_j+|w|-1]} = r^{t_{i_{j-1}} t_{i_j}}(x_{[i_j, i_j+|w|-1]}, x'_{[i_j, i_j+|w|-1]})
\]

It is easy to check that for $\bar\lambda$-almost every $(x,x',t)$, the point $\hat z(x,x',t)$ is well defined and is a point in $X$. As a random variable, one can also check that $\pi(\hat z) = \hat y$.

Define another random variable $\hat z'$ in much the same way as $\hat z$ except this time the meaning of the symbols 1 and 2 are swapped: the symbol 1 now means taking from the second path and 2 means taking  from the first path. $\hat z'$ is in some sense dual to $\hat z$. It is easy to check that the joint random variable $(\hat z, \hat z'): \Omega \to X^2$ as a function is shift-commuting, therefore the distribution of $(\hat z, \hat z')$ is an invariant measure on $X^2$, which we denote by $\lambda'$. This measure $\lambda'$ is a relative joining over $\nu$ because $\pi(\hat z) = \hat y = \pi(\hat z')$.

We have the following four equality or inequalities:
the inequality holds because $h(\hat t, \hat x, \hat x' | \hat z, \hat z') = h(\hat t, \hat x, \hat x', \hat z, \hat z') - h(\hat z, \hat z')$ and the second-to-last equality holds because it is the entropy of the jump extension.
\begin{align*}
  h(\lambda') &= h(\hat z, \hat z') \\
  h(\hat z, \hat z') + h(\hat t, \hat x, \hat x' | \hat z, \hat z') &\ge h(\hat t, \hat x, \hat x') \\
  h(\hat t, \hat x, \hat x') &= h(\hat x,\hat x') + \Pr(\hat t_0 > 0) h(\eta) \\
  h(\hat x, \hat x') &= h(\lambda)
\end{align*}

So we can conclude
$$ h(\lambda') - h(\lambda) \ge \Pr(\hat t_0 > 0) h(\eta) - h_0$$
where
$$ h_0 := h(\hat t, \hat x, \hat x' | \hat z, \hat z')$$

We want to bound $h_0$ from above. We divide it into $ h_0 = h_1 + h_2$ where  $$ h_1 = h(\hat t | \hat z, \hat z')$$
and $$ h_2 = h(\hat x, \hat x' | \hat t, \hat z, \hat z') $$

We obtain an upper bound for $h_1$ first. To do that, we introduce two more random variables $\hat t'$ and $\hat t''$.

$$ \hat t'_i =
\begin{cases}
  \hat t_i & \text{when } \hat t_i = 0, 3 \\
  4 & \text{when } \hat t_i = 1,2
\end{cases}
$$

The random variable $\hat t'$ captures partial information of $\hat t$ by not distinguishing 1 and 2.

$$ \hat t''_i =
\begin{cases}
  0 & \text{when } \hat t_i = 0 \\
  1 & \text{when } \hat t_i > 0
\end{cases}
$$

The random variable $\hat t''$ captures partial information of $\hat t$ that corresponds to where zeroes occur in $\hat t$ and where nonzeros occur. The following three events are equivalent mod $\bar\lambda$:
\begin{align*}
  \hat t''_i = 1 \\ \hat t'_i > 0 \\ \sigma^i(\hat y) \in [w]
\end{align*}

Note that $\hat y$ determines $\hat t''$. Also, $\hat t$ determines $\hat t'$ which in turn determines $\hat t''$.

We decompose $h_1$ into
$$ h_1 \le h(\hat t' | \hat z, \hat z') + h(\hat t | \hat t', \hat z, \hat z')$$

Since $\hat z$ determines $\hat y$ which in turn determines $\hat t''$, we have the following bound for the first term
$$ h(\hat t' | \hat z, \hat z') \le h(\hat t' | \hat t'')$$
but there is only $N$ possible values for $\hat t'$ given the value of $\hat t''$, therefore $h(\hat t' | \hat t'') = 0$ and we have
$$ h(\hat t' | \hat z, \hat z') \le h(\hat t' | \hat t'') = 0$$
and so
$$ h_1 \le h(\hat t | \hat t', \hat z, \hat z')$$

Therefore
\begin{align*}
  h_1 &\le H(\hat t_0 | \hat t_{[1,\infty)}, \hat t', \hat z, \hat z') \\
  &\le H(\hat t_0 | \hat t'_0, \hat z, \hat z') \\
  &= \Pr(\hat t'_0 = 4) H(\hat t_0 | \hat t'_0, \hat z, \hat z' | \hat t'_0 = 4)\\
  & \quad + \Pr(\hat t'_0 \neq 4) H(\hat t_0 | \hat t'_0, \hat z, \hat z' | \hat t'_0 \neq 4)\\
  &\le \Pr(\hat t'_0 = 4) H(\hat t_0 | \hat z, \hat z' | \hat t'_0 = 4)\\
  & \quad + \Pr(\hat t'_0 \neq 4) H(\hat t_0 | \hat t'_0 | \hat t'_0 \neq 4)\\
  &= \Pr(\hat t'_0 = 4) H(\hat t_0 | \hat z, \hat z' | \hat t'_0 = 4)
\end{align*}
where the last equality holds because $H(\hat t_0 | \hat t'_0 | \hat t'_0 \neq 4) = 0$ which is because $\hat t'_0$ determines $\hat t_0$ given the event $\hat t'_0 \neq 4$.

So we have
$$ h_1 \le \Pr(\hat t'_0 = 4) H^* $$
where
$$ H^* = H(\hat t_0 | \hat z, \hat z' | \hat t'_0 = 4) $$

We want to obtain an upper bound on $H^*$ such that it approaches 0 as $N \to 0$ and does not depend on $p$.

For convenience of further calculation, we let $J = [|w|, N-1]$ which depends on $N$ but not on $p$. Note that given the event $\hat t'_0 = 4$, the value of $(\hat z, \hat z')_J$ is either $(\hat x, \hat x')_J$ or $(\hat x', \hat x)_J$ depending on whether $\hat t_0$ is 1 or 2. Therefore, given the event $\hat t'_0 = 4$ and the event $(\hat x, \hat x')_J \in G_1 \times G_2$ where $G_1$ and $G_2$ are disjoint sets of blocks that we will define later, the value of $(\hat z, \hat z')_J$ determines the value of $\hat t_0$ (by just looking at which one of $G_1$ and $G_2$ the block $\hat z_J$ belongs to).

To define $G_1, G_2$, first choose $a$ to be an $X$-block such that $\mu_1(a) \neq \mu_2(a)$ and let $d = |\mu_1(a) - \mu_2(a)| > 0$. Such a block exists because $\mu_1$ and $\mu_2$ are assumed to be distinct. Let $G_1$ be the set of all $X$-blocks $b$ of length $|J| = N - |w|$ such that $$| D(a|b) - \mu_1(a) | < \frac{d}{2}$$ where $D(a|b)$ denotes the frequency of $a$ in $b$. Similarly, let $G_2$ to be the set of all $X$-blocks $b$ of length $|J|$ such that $$| D(a|b) - \mu_2(a) | < \frac{d}{2}$$

It is clear that the two sets $G_1, G_2$ are disjoint. By Lemma~\ref{lem:badE} we have
\begin{align*}
  H^* &\le H(\hat t_0 | (\hat z, \hat z')_J | \hat t'_0 = 4) \\
  &\le P^* \log 2 + H_{P^*}
\end{align*}
where $P^*$ denotes the conditional probability given by
$$ P^* = \Pr((\hat x, \hat x')_J \not\in G_1 \times G_2 |  \hat t'_0 = 4)$$

We want to show that $P^*$ is a quantity that goes to 0 as $N \to \infty$ and does not depend on $p$.

Write
$$ P^* = \frac{\Pr((\hat x, \hat x')_J \not\in G_1 \times G_2,  \hat t'_0 = 4)}{\Pr(\hat t'_0 = 4)}
$$
and apply Lemma~\ref{lem:t0} and Lemma~\ref{lem:t0special} to the numerator and the denominator to get
$$ P^* = \frac{\lambda(F_J)}{\nu(w)}$$
where $F_J \subset X^2$ denotes the set of $(x,x')$ such that $(x,x')_J \not\in G_1 \times G_2$ and $\pi(x) \in [w]$. The set $F_J$ depends on $J$ which in turn depends on $N$ but the set does not depend on $p$. It is easy to show, using the mean ergodic theorem applied to ergodic $\mu_1$ and $\mu_2$, that $\lim_N\lambda(F_J) = 0$.
Therefore $P^*$ (and hence $H^*$ too) is a quantity that does not depend on $p$ and goes to $0$ when $N \to \infty$. Denote $H^*$ by $H^*(N)$ to express its dependency on the parameter $N$. We showed that
$$ h_1 \le \Pr(\hat t'_0 = 4) H^*(N) $$
where $H^*(N)$ is a quantity that does not depend on $p$ and that $\lim_N H^*(N) = 0$.

Next we want to obtain an upper bound for $$ h_2 = h(\hat x, \hat x' | \hat t, \hat z, \hat z') $$

For $\lambda$-almost every $(x,x')$, let $q = q(x,x')$ be the smallest nonnegative number such that $\sigma^q \pi(x) \in [w]$ and let
$$\dots < q_{-1} < q_0 = q < q_1 < q_2 < \dots$$ be all the coordinates $i$ for which $\sigma^i \pi(x) \in [w]$.

Let $\hat q_k = q_k(\hat x, \hat x')$. Each $\hat q_k$ is an integer-valued random variable. Using them, define
$$\hat u = (\hat t_{\hat q_k})_{-N \le k \le 0}$$

The random variable $\hat u$ takes values in $\{1,2,3\}^{N+1}$ and the probability of the event $\hat u = u$ for each block $u$ is given by $\Pr(\hat u =u) = \eta(u)$.

Define the two events
\begin{align*}
  S_{12} &= [\hat y \in [w], \hat u = 13^{N-1}2] \\
  S'_{12} &= \bigcup_{0 \le k < |w|} \sigma^{k}(S_{12})
\end{align*}

The event $S'_{12}$ represents the event of the coordinate 0 falling to one of the subregions where we used the function $r^{12}$. Define $S_{21}$ and $S'_{21}$ similarly, with $23^{N-1}1$ in place of $13^{N-1}2$. Note that the four events we just defined are measurable with respect to $\hat t$. This allows us to use Lemma~\ref{lem:goodE} to say
\begin{align*}
  h_2 &\le H((\hat x, \hat x')_0 | \hat t, \hat z, \hat z') \\
  &\le \Pr(S'_{12}\cup S'_{21}) \log(C_0^2)
\end{align*}
where $C_0$ is the number of letters used in the SFT $X$.

We want to estimate $\Pr(S'_{12}\cup S'_{21})$ now.
\begin{align*}
  \Pr(S'_{12}) &\le |w|\Pr(S_{12}) \\
  &= |w| \cdot \nu(w) \cdot \eta(13^{N-1}2)\\
  &= |w| \cdot \nu(w) \cdot \frac{p(1-p)}{N}
\end{align*}

So we have $$\Pr(S'_{12}\cup S'_{21}) \le C_1 \cdot \frac{p}{N}$$ where $C_1$ is some constant depending on $w$ but not on $N$ or $p$.

It remains to estimate $\mu'_1(V) + \mu'_2(V) - \mu_1(V) - \mu_2(V)$ (we denote its absolute value by $h_3$ for later reference) which is the expectation of the real-valued random variable:
$$V(\hat z) + V(\hat z') - V(\hat x) - V(\hat x')$$

By using the same argument as in \cite{ant2014compen}, or alternatively by moving the calculation to the derivative system induced on $S'_{12}\cup S'_{21}$, one can show
$$ h_3 \le \Pr(S'_{12}\cup S'_{21}) \cdot C_2$$
where $C_2$ is some constant depending on $|w|$ and $V$ but not on $N$ or $p$.

We obtained upper bounds for all relevant quantities to estimate:
$$\Delta := (h(\lambda') + \mu_1'(V) + \mu_2'(V)) - (h(\lambda) + \mu_1(V) + \mu_2(V))$$
which is greater than or equal to
\begin{align*}
  & h(\lambda') - h(\lambda) - h_3\\
  &\ge \Pr(\hat t_0 > 0) h(\eta) - h_0 - h_3\\
  &\ge \Pr(\hat t_0 > 0) h(\eta) - h_1 - h_2 - h_3\\
  &\ge \Pr(\hat t_0 > 0) h(\eta)\\
  & \quad - \Pr(\hat t'_0 = 4) H^*(N) \\
  & \quad - \Pr(S'_{12}\cup S'_{21}) \log(C_0^2) \\
  & \quad - \Pr(S'_{12}\cup S'_{21}) \cdot C_2 \\
  &\ge \nu(w) \cdot \frac{H_p}{N}\\
  & \quad - \frac{\nu(w)}{N} \cdot H^*(N) \\
  & \quad - C_1 \cdot \frac{p}{N} \cdot \log(C_0^2) \\
  & \quad - C_1 \cdot \frac{p}{N} \cdot C_2 \\
\end{align*}

By choosing appropriate constants $C_3, C_4, C_4$ that does not depend on $N$ or $p$, we have
$$ \Delta \ge \frac{C_3 \cdot H_p - C_4 \cdot H^*(N) - C_5 \cdot p}{N}$$

Now we determine $(N,p)$. Choose $p$ to be be small enough that
$$ C_3 \cdot H_p - C_5 \cdot p > 0$$
and then choose $N$ to be large enough that
$$  C_4 \cdot H^*(N) < C_3 \cdot H_p - C_5 \cdot p$$
We chose $(N,p)$ so that
$$ \Delta > 0$$

\end{proof}

\begin{corollary}
    Let $(X,Y,\pi)$ be a factor triple and $\nu$ an ergodic measure on $Y$. Let $V$ be a function on $X$ with summable variation. Let $\lambda$ be a relative joining of distinct ergodic measures $\mu_1,\mu_2$ over $\nu$ such that $\lambda(D_1) > 0$ where $D_1$ is the class diagonal. Then there is another relative joining $\lambda'$ on $X^2$ over $\nu$ such that
$$ h(\lambda') + \mu_1'(V) + \mu_2'(V) > h(\lambda) + \mu_1(V) + \mu_2(V) $$
where $\mu'_1 = p_1(\lambda')$ and $\mu'_2 = p_2(\lambda')$. 
\end{corollary}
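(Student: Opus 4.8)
The plan is to deduce the corollary from the Lemma by cutting $\lambda$ along the invariant set $D_1$. If $\lambda(D_1)=1$ then $\lambda$ is itself a class diagonal joining of $\mu_1,\mu_2$ over $\nu$ and the Lemma applies verbatim, so I may assume $0<\lambda(D_1)<1$. Since $D_1$ is an invariant Borel subset of $X^2$, I write
$$\lambda \;=\; \lambda(D_1)\,\lambda_0 \;+\; \lambda(D_1^{c})\,\lambda_1,$$
where $\lambda_0$ and $\lambda_1$ are the normalized restrictions of $\lambda$ to $D_1$ and to $D_1^{c}$. Both are invariant probability measures, and both are relative joinings because $\lambda_0,\lambda_1\ll\lambda$, so the identity $\pi\circ p_1=\pi\circ p_2$ still holds almost everywhere.

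The central point is that $\lambda_0$ is again a class diagonal joining of the \emph{same} distinct ergodic measures $\mu_1,\mu_2$ over $\nu$. To see this, observe that $p_1(\lambda\vert_{D_1})$ is a $\sigma$-invariant measure dominated by $p_1(\lambda)=\mu_1$; since $\mu_1$ is ergodic, any invariant measure $\le\mu_1$ is a scalar multiple of $\mu_1$ (its Radon--Nikodym derivative is an invariant, hence a.e.\ constant, function), so $p_1(\lambda\vert_{D_1})=\lambda(D_1)\,\mu_1$ and therefore $p_1(\lambda_0)=\mu_1$. The same argument gives $p_2(\lambda_0)=\mu_2$, and, applied to $(\pi\circ p_1)(\lambda\vert_{D_1})\le\nu$ with $\nu$ ergodic, it gives $(\pi\circ p_1)(\lambda_0)=\nu$. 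Since $\lambda_0(D_1)=1$, $\lambda_0$ is a class diagonal joining whose marginals $\mu_1,\mu_2$ are distinct and ergodic, which is exactly the hypothesis of the Lemma (note the Lemma does not require $\lambda_0$ itself to be ergodic). The same reasoning shows $\lambda_1$ is a relative joining over $\nu$.

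Next I apply the Lemma to $\lambda_0$ to obtain a relative joining $\lambda_0'$ over $\nu$ with
$$h(\lambda_0') + \int V\,d(p_1\lambda_0') + \int V\,d(p_2\lambda_0') \;>\; h(\lambda_0) + \int V\,d\mu_1 + \int V\,d\mu_2,$$
and I set $\lambda':=\lambda(D_1)\,\lambda_0'+\lambda(D_1^{c})\,\lambda_1$, which is again a relative joining over $\nu$ (its $Y$-projection is $\lambda(D_1)\nu+\lambda(D_1^{c})\nu=\nu$). Since the metric entropy is affine as a function of the invariant measure on the SFT $X^2$, we get $h(\lambda')=\lambda(D_1)h(\lambda_0')+\lambda(D_1^{c})h(\lambda_1)$ and $h(\lambda)=\lambda(D_1)h(\lambda_0)+\lambda(D_1^{c})h(\lambda_1)$, while $\int V\,d(p_i\lambda')$ and $\int V\,d(p_i\lambda)$ decompose linearly in the same way, using $p_1\lambda_0=\mu_1$ and $p_2\lambda_0=\mu_2$. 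Subtracting, the $\lambda_1$-contributions cancel, and the difference
$$\bigl(h(\lambda') + {\textstyle\int} V\,d\mu_1' + {\textstyle\int} V\,d\mu_2'\bigr) - \bigl(h(\lambda) + {\textstyle\int} V\,d\mu_1 + {\textstyle\int} V\,d\mu_2\bigr)$$
equals $\lambda(D_1)>0$ times the strictly positive gain furnished by the Lemma, so it is positive, as required.

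I do not expect a genuine obstacle here: the entire argument is bookkeeping on top of the Lemma, whose proof carries all the real content. The only two points that need care are the identification of the marginals of $\lambda_0$ (this is precisely where ergodicity of $\mu_1$, $\mu_2$, and $\nu$ is used) and the affineness of metric entropy in the measure; both are standard.
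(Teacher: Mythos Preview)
Your proof is correct and follows essentially the same approach as the paper: decompose $\lambda$ along the invariant set $D_1$, apply the Lemma to the class-diagonal piece, and recombine. You supply more detail than the paper does---in particular, the ergodicity argument identifying the marginals of $\lambda_0$ as $\mu_1,\mu_2,\nu$, and the explicit use of affineness of entropy---whereas the paper simply asserts these points; but the structure is identical.
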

\begin{proof}
  We may assume $0 < p := \lambda(D_1) < 1$. We can decompose $\lambda$ into convex combination of two invariant measures:
$$ \lambda = p \lambda_1 + (1-p) \lambda_2 $$
where $\lambda_1(D_1) = 1$ and then both $\lambda_i$ are relative joinings of $\mu_1, \mu_2$ over $\nu$ because $\mu_1, \mu_2, \nu$ are assumed ergodic. By the previous lemma,  there is a  relative joining $\lambda'_1$ over $\nu$ such that
$$h(\lambda'_1) + (p_1(\lambda'_1))(V) + (p_2(\lambda'_1))(V) > h(\lambda_1) + \mu_1(V) + \mu_2(V) $$

We write
$$\lambda' = p \lambda'_1 + (1-p) \lambda_2$$
then $\lambda'$ is a relative joining over $\nu$.

It is an easy check that $\lambda'$ satisfies the strict inequality in the conclusion.
\end{proof}

\begin{corollary}
  Let $(X,Y,\pi)$ be a factor triple and $\nu$ an ergodic measure on $Y$. Let $V$ be a function on $X$ with summable variation. Let $\lambda$ be a relatively independent joining of distinct ergodic measures $\mu_1,\mu_2$ over $\nu$ where $\mu_1, \mu_2$ are both relative equilibrium states of $V$ over $\nu$. Then $\lambda(D_1) = 0$.
\end{corollary}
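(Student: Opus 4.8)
The plan is to argue by contradiction. Suppose $\lambda(D_1) > 0$. Since $\mu_1$ and $\mu_2$ are distinct ergodic measures on $X$ with $\pi(\mu_1) = \nu = \pi(\mu_2)$ and $\lambda$ is a relative joining of them over $\nu$ with $\lambda(D_1) > 0$, the previous corollary produces a relative joining $\lambda'$ on $X^2$ over $\nu$ such that
$$ h(\lambda') + \mu_1'(V) + \mu_2'(V) > h(\lambda) + \mu_1(V) + \mu_2(V), $$
where $\mu_i' = p_i(\lambda')$. I will convert this into a statement purely about the quantities $h(\mu) + \mu(V)$ for measures $\mu$ projecting to $\nu$, which the equilibrium hypothesis on $\mu_1, \mu_2$ forbids.

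First I would rewrite the two entropies appearing above. Note that all of $h(\nu), h(\mu_1), h(\mu_2), h(\mu_1'), h(\mu_2')$ are finite because $X$ and $Y$ are subshifts over finite alphabets, and all of $\mu_1(V), \mu_2(V), \mu_1'(V), \mu_2'(V)$ are finite because $V$, having summable variation, is bounded on the compact space $X$. Since $\lambda'$ is a relative joining over $\nu$, both $\mu_1'$ and $\mu_2'$ project to $\nu$, and applying the subadditivity of relative entropy recalled above (with generating partitions for $\mu_1'$ and $\mu_2'$ on the two coordinates, and a generating partition for $\nu$ on the common factor, the latter refined by each of the former) gives
$$ h(\lambda') \le h(\mu_1') + h(\mu_2') - h(\nu). $$
For the relatively independent joining $\lambda$ of $\mu_1$ and $\mu_2$ over $\nu$, relative independence promotes the same computation to the equality
$$ h(\lambda) = h(\mu_1) + h(\mu_2) - h(\nu). $$

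Substituting both relations into the strict inequality and cancelling $h(\nu)$ from both sides yields
$$ \bigl( h(\mu_1') + \mu_1'(V) \bigr) + \bigl( h(\mu_2') + \mu_2'(V) \bigr) > \bigl( h(\mu_1) + \mu_1(V) \bigr) + \bigl( h(\mu_2) + \mu_2(V) \bigr). $$
But $\mu_1$ and $\mu_2$ are both relative equilibrium states of $V$ over $\nu$, so $h(\mu_1) + \mu_1(V) = h(\mu_2) + \mu_2(V)$ equals the supremum of $h(\mu) + \mu(V)$ over all invariant measures $\mu$ on $X$ with $\pi(\mu) = \nu$; since $\mu_1'$ and $\mu_2'$ are such measures, $h(\mu_i') + \mu_i'(V) \le h(\mu_i) + \mu_i(V)$ for $i = 1, 2$. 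Adding these two inequalities contradicts the displayed strict inequality, so $\lambda(D_1) = 0$.

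I do not expect a serious obstacle here; the one place requiring care is the bookkeeping of relative entropies — ensuring every entropy in sight is finite so that $h(\nu)$ may legitimately be cancelled, and invoking the additivity of relative entropy for the relatively independent joining versus its mere subadditivity for a general relative joining, each with the correct generating partitions. This is exactly where the hypotheses that $Y$ is sofic (so $h(\nu) < \infty$) and that $V$ has summable variation (so $V$ is bounded) are used.
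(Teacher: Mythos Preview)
Your proof is correct and follows essentially the same route as the paper: both argue by contradiction, invoke the previous corollary, use the equality $h(\lambda)=h(\mu_1)+h(\mu_2)-h(\nu)$ for the relatively independent joining together with the subadditivity $h(\lambda')\le h(\mu_1')+h(\mu_2')-h(\nu)$ for the new joining, and then derive a contradiction from the equilibrium hypothesis. Your explicit attention to finiteness of the entropies and integrals before cancelling $h(\nu)$ is a welcome bit of care that the paper leaves implicit.
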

\begin{proof}
Suppose $\lambda(D_1) > 0$ instead. The previous corollary then applies to produce another relative joining $\lambda'$ on $X^2$ over $\nu$ such that
$$ h(\lambda') + \mu_1'(V) + \mu_2'(V) > h(\lambda) + \mu_1(V) + \mu_2(V) $$
where $\mu'_1 = p_1(\lambda')$ and $\mu'_2 = p_2(\lambda')$. Note that $\mu_1, \mu_2, \mu'_1, \mu'_2$ all project to $\nu$.

We have
\begin{align*}
  h(\lambda) + \mu_1(V) + \mu_2(V) &= h(\mu_1|\nu)+h(\mu_2|\nu)+h(\nu) + \mu_1(V) + \mu_2(V) \\
  &= (h(\mu_1|\nu)+\mu_1(V)) + (h(\mu_2|\nu)+\mu_2(V)) + h(\nu)  \\
  &\ge (h(\mu'_1|\nu)+\mu'_1(V)) + (h(\mu'_2|\nu)+\mu'_2(V)) + h(\nu) \\
  &= h(\mu'_1|\nu)+h(\mu'_2|\nu)+h(\nu) + \mu'_1(V) + \mu'_2(V) \\
  &\ge  h(\lambda') + \mu'_1(V) + \mu'_2(V)
\end{align*}
which contradicts our initial strict inequality.
\end{proof}

\begin{theorem}
  Let $(X,Y,\pi)$ be a factor triple and $\nu$ an ergodic measure on $Y$. Let $V$ be a function on $X$ with summable variation. The number of ergodic relative equilibrium states of $V$ over $\nu$ is less than or equal to the class degree of $\nu$.
\end{theorem}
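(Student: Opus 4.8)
The plan is to combine the last corollary above with a pigeonhole argument of the type used in~\cite{all2013classdegrelmaxent}. Write $d = c_\nu$ for the class degree of $\nu$ and suppose, for contradiction, that $\mu_1, \dots, \mu_{d+1}$ are $d+1$ distinct ergodic relative equilibrium states of $V$ over $\nu$. Let $\lambda$ be the relatively independent joining of $\mu_1, \dots, \mu_{d+1}$ over $\nu$, obtained by disintegrating each $\mu_k$ as $\mu_k = \int_Y (\mu_k)_y\, d\nu(y)$ and setting $\lambda = \int_Y (\mu_1)_y \times \dots \times (\mu_{d+1})_y\, d\nu(y)$. This is an invariant probability measure on $X^{d+1}$ such that for $\lambda$-almost every tuple $(x^{(1)}, \dots, x^{(d+1)})$ we have $\pi(x^{(1)}) = \dots = \pi(x^{(d+1)}) =: y$, the distribution of $y$ under $\lambda$ is $\nu$, and the $k$-th coordinate marginal of $\lambda$ is $\mu_k$.

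Next I would fix a pair $i < j$, let $p_{ij}: X^{d+1} \to X^2$ be the projection onto the $i$-th and $j$-th coordinates, and observe that since the $\{i,j\}$-marginal of $(\mu_1)_y \times \dots \times (\mu_{d+1})_y$ is $(\mu_i)_y \times (\mu_j)_y$, the pushforward $(p_{ij})_* \lambda$ is exactly the two-fold relatively independent joining $\lambda_{ij}$ of $\mu_i$ and $\mu_j$ over $\nu$. As $\mu_i$ and $\mu_j$ are distinct ergodic relative equilibrium states of $V$ over $\nu$, the last corollary gives $\lambda_{ij}(D_1) = 0$, and hence $\lambda(p_{ij}^{-1}(D_1)) = 0$. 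Because $\lambda$ is a relative joining over $\nu$, the set $p_{ij}^{-1}(D_1)$ coincides $\pmod\lambda$ with the set of tuples for which $x^{(i)}$ and $x^{(j)}$ lie in the same transition class over $y = \pi(x^{(i)})$. Taking the union over the finitely many pairs $i < j$, I conclude that for $\lambda$-almost every tuple the $d+1$ points $x^{(1)}, \dots, x^{(d+1)}$ lie in pairwise distinct transition classes over $y$.

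Finally, by Theorem~\ref{thm:classdeg} the point $y$ has exactly $d = c_\nu$ transition classes over it for $\nu$-almost every $y$, hence (since $y$ has distribution $\nu$ under $\lambda$) for $\lambda$-almost every tuple. Since $d + 1 > d$, no tuple can meet both conditions, which contradicts the fact that $\lambda$ is a probability measure; therefore there are at most $d = c_\nu$ ergodic relative equilibrium states of $V$ over $\nu$. The substantive work has already been done in the lemma producing the improved relative joining $\lambda'$ and in the three corollaries deduced from it, so the only obstacle remaining in this final step is essentially bookkeeping: checking that the two-coordinate marginal of the $(d+1)$-fold relatively independent joining is the two-fold one, and that on a relative joining over $\nu$ the Borel set $D_1$ records precisely the relation ``$x^{(i)}$ and $x^{(j)}$ lie in the same transition class''.
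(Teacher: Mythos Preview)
Your argument is correct and is essentially the same as the paper's: both form the $(d+1)$-fold relatively independent joining, note that each two-coordinate marginal is the two-fold relatively independent joining of the corresponding pair, apply the preceding corollary to get $\lambda_{ij}(D_1)=0$ for all $i<j$, and then derive a contradiction with Theorem~\ref{thm:classdeg} via pigeonhole. Your write-up is simply a more detailed version of the paper's terse proof.
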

\begin{proof}
  Suppose $d$ is the class degree of $\nu$ and that $\mu_1, \dots, \mu_{d+1}$ are $d+1$ distinct ergodic relative equilibrium states of $V$ over $\nu$. Form the $(d+1)$-fold relatively independent joining of these $d+1$ measures over $\nu$. The fact that there are only $d$ transition classes over $\nu$-almost every $y$ ensures the existence of distinct $i, j$ such that the projection of the $(d+1)$-fold joining to $i, j$ violates the previous corollary.
\end{proof}

\begin{corollary}
  Let $(X,Y,\pi)$ be a factor triple and $\nu$ an ergodic measure on $Y$ with full support. Let $V$ be a function on $X$ with summable variation. The number of ergodic relative equilibrium states of $V$ over $\nu$ is less than or equal to the class degree of $\pi$.
\end{corollary}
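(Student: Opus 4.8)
The plan is to deduce this immediately from the previous theorem together with Theorem~\ref{thm:classdeg}, with no new construction needed. The previous theorem already establishes that, for \emph{any} ergodic $\nu$ on $Y$, the number of ergodic relative equilibrium states of $V$ over $\nu$ is at most the class degree $c_\nu$ of the ergodic measure $\nu$. So the only remaining task is to identify $c_\nu$ with the class degree of $\pi$ under the additional hypothesis that $\nu$ is fully supported.

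That identification is exactly the last sentence of Theorem~\ref{thm:classdeg}: if $\nu$ is fully supported, then $c_\nu$ equals the class degree of $\pi$. Hence I would argue as follows. By the previous theorem, the number of ergodic relative equilibrium states of $V$ over $\nu$ is $\le c_\nu$. Since $\nu$ has full support, Theorem~\ref{thm:classdeg} gives $c_\nu = c_\pi$, the class degree of $\pi$. Combining the two inequalities yields the claim.

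I do not anticipate any obstacle: all the substantive work has already been carried out, on the one hand in the main lemma and the chain of corollaries leading to the previous theorem (the coin-tossing/joining argument, which is where the entropy-versus-integral estimate lives), and on the other hand in Theorem~\ref{thm:classdeg}, which is quoted from \cite{all2013classdegrelmaxent}. If one wanted the corollary to be self-contained one could instead recall that for a fully supported $\nu$ the $\nu$-minimal transition blocks are precisely the minimal transition blocks of $\pi$ (as noted just after their definition), so that the depth of a $\nu$-minimal transition block is the class degree of $\pi$; but since this is already subsumed in Theorem~\ref{thm:classdeg}, a one-line appeal to that theorem suffices.
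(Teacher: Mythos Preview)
Your proposal is correct and matches the paper's own proof essentially verbatim: apply the preceding theorem to bound the number of ergodic relative equilibrium states by $c_\nu$, then invoke Theorem~\ref{thm:classdeg} to conclude $c_\nu$ equals the class degree of $\pi$ when $\nu$ is fully supported.
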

\begin{proof}
  Since $\nu$ has full support, the class degree of $\nu$ is the class degree of $\pi$.
\end{proof}



\bibliographystyle{amsplain}
\bibliography{my-reference} 

\providecommand{\bysame}{\leavevmode\hbox to3em{\hrulefill}\thinspace}
\providecommand{\MR}{\relax\ifhmode\unskip\space\fi MR }
\providecommand{\MRhref}[2]{%
  \href{http://www.ams.org/mathscinet-getitem?mr=#1}{#2}
}
\providecommand{\href}[2]{#2}
\begin{thebibliography}{1}

\bibitem{all2014structureclass}
Mahsa Allahbakhshi, Soonjo Hong, and Uijin Jung, \emph{Structure of transition
  classes for factor codes on shifts of finite type}, Ergodic Theory and
  Dynamical Systems \textbf{FirstView} (2014), 1--18.

\bibitem{all2013classdegrelmaxent}
Mahsa Allahbakhshi and Anthony Quas, \emph{Class degree and relative maximal
  entropy}, Trans. Amer. Math. Soc. \textbf{365} (2013), no.~3, 1347--1368.
  \MR{3003267}

\bibitem{ant2014compen}
John Antonioli, \emph{Compensation functions for factors of shifts of finite
  type}, Ergodic Theory and Dynamical Systems \textbf{FirstView} (2014), 1--15.

\bibitem{LM}
D.~Lind and B.~Marcus, \emph{{An Introduction to Symbolic Dynamics and
  Coding}}, Cambridge University Press, Cambridge, 1995.

\bibitem{Parry-1964-intrinsic-markov-chain}
W.~Parry, \emph{Intrinsic {M}arkov chains}, Trans. Amer. Math. Soc.
  \textbf{112} (1964), 55--66.

\bibitem{PQS-MaxRelEnt}
K.~Petersen, A.~Quas, and S.~Shin, \emph{Measures of maximal relative entropy},
  Ergodic Theory Dynam. Systems \textbf{23} (2003), no.~1, 207--223.

\bibitem{Walters-Relative}
P.~Walters, \emph{Relative pressure, relative equilibrium states, compensation
  functions and many-to-one codes between subshifts}, Trans. Amer. Math. Soc.
  \textbf{296} (1986), no.~1, 1--31.

\bibitem{yoomaximal}
J.~Yoo, \emph{Measures of maximal relative entropy with full support}, Ergodic
  Theory Dynam. Systems (2010).

\end{thebibliography}

\end{document}